\newif\ifPDF
\tikzset{EdgeStyle/.style = {->}}
\tikzset{LabelStyle/.style= {fill=yellow}}
\theoremstyle{definition}
\newtheorem{definition}{Definition}[section]
\theoremstyle{remark}
\newtheorem{example}[definition]{Example}
\newtheorem{remark}[definition]{Remark}
\theoremstyle{plain}
\newtheorem{thm}[definition]{Theorem}
\newtheorem{theorem}[definition]{Theorem}
\newtheorem{prop}[definition]{Proposition}
\newtheorem{lemma}[definition]{Lemma}
\newtheorem{corol}[definition]{Corollary}
\newtheorem{claim}[definition]{Claim}
\def\ol{\overline}
\def\ov{\overline}
\def\tl{\widetilde}
\def\wh{\widehat}
\newcommand{\N}{\mathbb N}
\newcommand{\Z}{\mathbb Z}
\newcommand{\be}{\begin{equation}}
\newcommand{\ee}{\end{equation}}
\newcommand{\ba}{\begin{aligned}}
\newcommand{\ea}{\end{aligned}}
\newcommand{\mc}{\mathcal}
\newcommand{\De}{\Delta}
\numberwithin{equation}{section}
\newcommand{\ignore}[1]{}
\begin{document}

\title{Horizontally stationary generalized Bratteli diagrams}

\author{Sergey Bezuglyi}
\address{Department of Mathematics,
University of Iowa, Iowa City, IA 52242-1419
USA}
\email{sergii-bezuglyi@uiowa.edu}

\author{Palle E.T. Jorgensen}
\address{Department of Mathematics, University of Iowa, Iowa City, IA 52242-1419 USA}
\email{palle-jorgensen@uiowa.edu}

\author{Olena Karpel}
\address{AGH University of Krakow, Faculty of Applied Mathematics, al. Adama Mickiewicza~30, 30-059 Krak\'ow, Poland \&
B. Verkin Institute for Low Temperature Physics and Engineering,
47~Nauky Ave., Kharkiv, 61103, Ukraine}

\email{okarpel@agh.edu.pl}

\author{Jan Kwiatkowski}

\address{Faculty of Mathematics and Computer Science, Nicolaus Copernicus University, ul. Chopina 12/18, 87-100 Toru\'n, Poland}

\email{jkwiat@mat.umk.pl}

\date{}

\begin{abstract}
Bratteli diagrams with countably infinite levels exhibit a new phenomenon: they can be horizontally stationary. The incidence matrices of these horizontally stationary Bratteli diagrams are infinite banded Toeplitz matrices. In this paper, we study the fundamental properties of horizontally stationary Bratteli diagrams. In these diagrams, we provide an explicit description of ergodic tail invariant probability measures.
For a certain class of horizontally stationary Bratteli diagrams, we prove that all ergodic tail invariant probability measures are extensions of measures from odometers. Additionally, we establish conditions for the existence of a continuous Vershik map on the path space of a horizontally stationary Bratteli diagram.

\end{abstract}

\maketitle

\section{Introduction}

In the present paper, we define and study the class of 
\textit{horizontally stationary generalized
Bratteli diagrams}. These diagrams can only be defined when all levels contain infinitely many vertices.
We begin by recalling the main concepts used throughout the paper.
A \textit{Bratteli diagram} refers to a graded graph $B=(V,E)$
where the sets of vertices and edges are partitioned into disjoint
unions: $V=\bigsqcup_{n\geq0}V_{n}$ and $E=\bigsqcup_{n\geq0}E_{n}$, 
with $E_{n}$ representing the edges between the vertices from 
$V_{n}$
and $V_{n+1}$. The condition $|V_{n}|<\infty$ for all $n$ defines
the class of \textit{standard} Bratteli diagrams which have played a crucial
role in the classification of AF $C^{*}$-algebras \cite{Bratteli1972} 
and, later, in the classification of minimal homeomorphisms of 
a Cantor set \cite{HermanPutnamSkau1992, GiordanoPutnamSkau1995}. 
The diagrams, where 
$V_{n}$ is countably infinite, are called 
 \textit{generalized Bratteli diagrams}; they were introduced and studied in recent works \cite{BezuglyiDooleyKwiatkowski2006, BezuglyiJorgensen2022, BezuglyiJorgensenKarpelSanadhya2023, BezuglyiKarpelKwiatkowski2024, BezuglyiKarpelKwiatkowskiWata,
BezuglyiJorgensenSanadhya2024}.  
For such diagrams, we will assume that $V_n = \Z$ for all $n$.
The sets $E_{n}$ determine the sequence of incidence
matrices $F_{n}, n \in \N_0,$ with entries that indicate the number of edges connecting
vertices of levels $V_{n}$ and $V_{n+1}$. 

We stress that the Bratteli diagrams introduced initially by Bratteli in 
\cite{Bratteli1972} entail the following finiteness assumption: it is assumed that every level-set in the diagram is a specified finite set. However, for our present context, it will be important to focus instead on the wider setting, i.e., the present case when each level in the diagram is a countably infinite set. This general setting allows us, in turn, to define the property of ``horizontally stationary'' (see below) which is our present focus. Moreover, this horizontal stationarity property fits the needs of important new applications. It also allows us to attack problems (for these generalized Bratteli diagrams) where the choice of incidence matrices in the vertical direction is non-stationary, i.e., when the incidence matrices of consecutive level sets change in the forward time direction. We further stress that problems in the literature dealing with the dynamics described by vertically non-stationary diagrams are difficult.

Bratteli diagrams are widely used in dynamics to construct models for transformations and study their properties. We refer to 
\cite{HermanPutnamSkau1992, GiordanoPutnamSkau1995, GlasnerWeiss1995, BezuglyiJorgensen2015, BrJoOstr2004, 
GPS2010,  Putnam2018, BezuglyiKarpel2016, DurandPerrin2022, 
BezuglyiKarpel_2020} and the 
literature therein.

Every homeomorphism of a Cantor set and every aperiodic
Borel automorphism of a standard Borel space can be realized as a transformation
(called \textit{Vershik map}) acting on the path space of a (generalized)
Bratteli diagram \cite{HermanPutnamSkau1992, Medynets2006, DownarowiczKarpel_2019, BezuglyiDooleyKwiatkowski2006}. 
This approach has significantly impacted the development of Cantor and Borel dynamics, as the properties of Vershik maps are more transparent and easier to study.

To define a horizontally stationary generalized Bratteli diagram, we 
consider the following 
property: the set of edges $e \in E_n$ incoming to a vertex $v$ does not depend on $v$. This implies that the sets $r^{-1}(v)$ and 
$r^{-1}(v')$ are identical up to a horizontal shift for any vertices $v$ and $v'$ from the same level. A generalized Bratteli
diagram $B$ satisfying this property is called \textit{horizontally stationary}; further details are given in Section \ref{sect 3}. In terms of incidence matrices, this means that $F_n$
is a banded matrix with equal entries along every diagonal, i.e., 
$F_n$ is a Toeplitz matrix.  

Our main results begin with the observation that the structure of horizontally stationary Bratteli diagrams provides an effective framework for studying tail invariant measures and the Vershik map (or tail equivalence relation $\mathcal R$). In Section \ref{sect 2}, we revisit the fundamental definitions related to Bratteli diagrams and examine key features of incidence matrices, such as equal row and equal column sums. Section \ref{sect 3} deals with the properties of horizontally stationary generalized Bratteli diagrams that directly follow from the definitions. Additionally, we apply the Fourier transform to reformulate the criterion for a sequence of positive vectors $(p^{(n)})$ 
 to determine a tail-invariant measure on the path space of a Bratteli diagram, see Proposition \ref{prop inv meas}. Section \ref{sect 4} focuses on the study of
tail invariant measures on path spaces $X_B$ of horizontally stationary 
Bratteli diagrams. 
Every sequence of vertices $\ol i = (i_n)$, $i_n \in V_n$, such that for every $n \in \mathbb{N}_0$ there exist edges between $i_n \in V_n$ and $i_{n+1} \in V_{n+1}$, defines a subdiagram $B(\ol i)$ of $B$.
The unique tail invariant ergodic probability  measure $\mu_{\ol i}$ 
on the path space $X_{B(\ol i)}$ can be extended by tail invariance to the ergodic measure 
$\wh \mu_{\ol i}$ supported by $\mathcal R$-saturation of
$X_{B(\ol i)}$. We provide the necessary and sufficient condition under which the extension $\wh \mu_{\ol i}$ is a finite measure. This occurs when the entry 
$f_{i_{n+1}i_n}^{(n)}$ dominates the sum $\sigma^{(n)}$ of all other entries in the $i_{n+1}$-row such that 
$$
\sum_{n = 0}^{\infty} \frac{\sigma^{(n)}}{f^{(n)}_{i_{n+1}i_{n}}} < \infty, 
$$
see Theorem \ref{Thm:ext_from_odom}. We then address the question of whether all ergodic probability measures on $X_B$
can be obtained through this construction. 
Theorem \ref{thm all erg meas} affirms this for a certain class of horizontally stationary Bratteli diagrams. 
The case when such Bratteli diagrams do not support finite measures is considered in Proposition \ref{prop:no_inv_meas}, which is further 
illustrated by examples. The final section considers orders on 
horizontally stationary Bratteli diagrams. We provide, in 
Theorem \ref{Thm:continVmap}, 
the necessary and sufficient conditions on an order under which the corresponding
Vershik map is a homeomorphism.

\section{Dynamics and measures on generalized Bratteli diagrams}
\label{sect 2}

This section briefly recalls the main definitions and notations regarding generalized Bratteli diagrams. For more details see \cite{BezuglyiJorgensenSanadhya2024, BezuglyiJorgensenKarpelSanadhya2023, BezuglyiKarpelKwiatkowski2024, 
BezuglyiKarpelKwiatkowskiWata}. 

\begin{definition}\label{Def:generalized_BD} A 
\textit{generalized Bratteli diagram} is a graded graph 
$B = (V, E)$ such that the 
vertex set $V$ and the edge set $E$ are disjoint unions  
 $V = \bigsqcup_{i=0}^\infty  V_i$ and $E = 
\bigsqcup_{i=0}^\infty  E_i$ of levels such that 

(i) the number of vertices at each level 
$V_i$, $i 
\in \N_0$, is countably infinite (in this paper, we will usually
identify each 
$V_i$ with $\Z$),

(ii) for every edge $e\in E$, we define the range and 
source maps $r$ and $s$ such that $r(E_i) = V_{i+1}$ and 
$s(E_i) = V_{i}$ for $i \in \N_0$, 

(iii) for every vertex $v \in V \setminus V_0$, we 
have $|r^{-1}(v)| < \infty$ ($|\cdot |$ denotes the cardinality of a set).
\end{definition} 

A finite or infinite sequence of edges $(e_i: e_i\in E_i)$ such 
that $s(e_i)=r(e_{i-1})$ is called a finite or infinite 
path, respectively. The set of infinite
paths starting at $V_0$ is denoted by $X_B$ and is called the \textit{path space} of the diagram $B$. For a finite path $\ol e = (e_0, ... , e_n)$, the set 
$$
    [\ol e] := \{x = (x_i) \in X_B : x_0 = e_0, ..., x_n = e_n\} 
$$ 
is the \textit{cylinder set} associated with $\ol e$. The {topology} on the path space $X_B$ generated by cylinder sets makes $X_B$ a 
zero-dimensional Polish space which is not locally compact, in general.

\begin{definition} Let $B = (V,E)$ be a generalized Bratteli diagram. 
    
(i) The set $V_i$ is called the \textit{$i$th level} of the diagram $B$.

(ii) For a vertex $v \in V_m$ and a vertex $w \in V_{n}$, let
$E(v, w)$ be the set of all finite paths between $v$ and $w$. 
Set $f^{(i)}_{v,w} = |E(v, w)|$ for every $w \in V_i$ and $v \in 
V_{i+1}$. This defines a sequence of non-negative countably infinite matrices $(F_i)$, $i \in \N_0$ 
(they are called the \textit{incidence  matrices}) where 
\begin{equation}\label{Notation:f^i}
    F_i = (f^{(i)}_{v,w} : v \in V_{i+1}, w\in V_i),\ \   
    f^{(i)}_{v,w}  \in \N_0.
\end{equation} 

(iii) Let $(F_n)$ be incidence matrices of $B$. If  $F_n = F$ for every $n \in 
\N_0$, then the diagram $B$ is called  \textit{(vertically) stationary}.

\end{definition}

For $w \in V_n$ and $n \in \N_0$, denote 
$X_w^{(n)} := \{x = (x_i)\in X_B : s(x_{n}) = w\}$. 
The collection $(X_w^{(n)}: w \in V_n)$ forms a partition 
of $X_B$ into clopen sets (\textit{Kakutani-Rokhlin towers})
corresponding to the vertices from $V_{n}$. For $w \in V_n$, set 
$$
H^{(n)}_w = \sum_{v_0 \in V_0} |E(v_0, w)|
$$ 
and $H^{(0)}_w = 1$ for all $w\in V_0$.
We call $H^{(n)}_w$ the \textit{height of the tower} $X_w^{(n)}$. 
The vectors of heights $H^{(n)} =  (H^{(n)}_w : w \in V_n)$ are 
related in the following obvious way:
 \begin{equation}\label{formula for heights}
F_n H^{(n)} = H^{(n+1)}, \ \ n \geq 1, \ \ \mbox{ and } \ \  H^{(n)} = F_{n-1} \ \cdots \ F_0 H^{(0)}.
\end{equation}

In this paper, we will also used the procedure of \textit{telescoping} of a generalized Bratteli diagram: given a generalized 
Bratteli diagram $B = (V,E)$ and a  monotone increasing sequence 
$(n_k : k \in \N_0), n_0 = 0$, define a generalized Bratteli 
diagram $B' = (V', E')$, where the vertex sets are 
determined by $V'_k = V_{n_k}$, and the edge sets  $E'_k = 
E_{n_{k}} \circ ...\circ E_{n_{k+1}-1}$ are formed by finite paths 
between the levels $V'_k$ and 
$V'_{k+1}$. The diagram $B' = (V', E')$  is called a 
\textit{telescoping} of the diagram $B = (V,E)$.

We will use below the sequence of \textit{stochastic incidence 
matrices} $(\tl F_n)$ associated to each generalized Bratteli diagram. Set
$\tl F_n = (\tl f_{vw}^{(n)} : v \in V_{n+1}, w \in V_n)$, where 
\be\label{eq_stoch matrix F_n}
\tl f_{vw}^{(n)} =  {f}_{vw}^{(n)}\ \cdot \frac{H_{w}^{(n)}}
{H_{v}^{(n + 1)}}.
\ee
Then we get from \eqref{formula for heights} that 
\begin{equation*}
\sum_{w\epsilon V_{n}}\tl f_{vw}^{(n)} =1, \quad v \in\ 
V_{n + 1}.    
\end{equation*}

For each $n \in \N_0$ and $m\in \N$, the product of incidence 
matrices $F_{n+m-1}\ \cdots \ F_n$ is denoted by ${G'}^{(n,m)}$. 
Then 
$ G^{(n,m)} = \tl F_{n+m-1} \ \cdots \ \tl F_n$ 
is the stochastic matrix corresponding to ${G'}^{(n,m)}$.

\begin{definition}
A matrix $F = (f_{ij})$ satisfies the \textit{equal row sum} property (we write $F\in ERS$ or $F\in ERS(r)$) if there exists $r$ such that $\sum_{j} f_{ij} = r$ for all $i$. A matrix $F = (f_{ij})$ has the {\em equal column sum} property ($F \in ECS$ or $F\in ERS(c)$) if there exists $c$ such that $\sum_{i} f_{ij} = c$ for all $j$.    
\end{definition}

\begin{remark}
Standard Bratteli diagrams with the $ERS$ property are models for Toeplitz subshifts (see~\cite{GjerdeJohansen2000}).    
\end{remark}

\begin{definition}\label{Def:BD_bdd_size} A generalized Bratteli 
diagram $B(F_n)$ is called of \textit{bounded size} if there exists 
a sequence of pairs of natural numbers $(t_n, L_n)_{n \in \N_0}$ 
such that, for all $n \in \mathbb{N}_0$ and all $v \in V_{n+1}$,
\begin{equation}\label{eq: Bndd size}
s(r^{-1}(v)) \in \{v - t_n, \ldots, v + t_n\} \quad \mbox{and} 
\quad \sum_{w \in V_{n}} f^{(n)}_{vw} = \sum_{w \in V_{n}} |E(w,v)| 
\leq L_n.
\end{equation} 
If the sequence $(t_n, L_n)_{n \in \N_0}$ can be chosen to be constant, i.e. $t_n = 
t$ and $L_n = L$ for all $n \in \N_0$, then we say that the 
diagram $B(F_n)$ is of \textit{uniformly bounded size}.
\end{definition} 

We will use the following convention for bounded size Bratteli 
diagrams. For each $n \in \N_0$, the pair of natural numbers 
$(t_n, L_n)$ are chosen to be the minimal possible.
We use the term \textit{measure} for a non-atomic positive Borel measure. 

\begin{definition}\label{Def:Tail_equiv_relation}
(i) Let $B$ be a standard or generalized Bratteli diagram.
Two paths $x= (x_i)$ and $y=(y_i)$ in $X_B$ are called 
\textit{tail equivalent} if there exists  $n \in \mathbb{N}_0$ 
such that $x_i = y_i$ for all $i \geq n$. This defines a 
countable Borel equivalence relation $\mathcal R$ on $X_B$ called the \textit{tail equivalence relation}.

(ii) A measure $\mu$ on $X_B$ is called \textit{tail invariant} if, for any $n \in \mathbb{N}$ and any cylinder sets $[\ol e] = [(e_0, \ldots, e_n)]$ and $[\ol e']= [(e'_0, \ldots, e'_n)]$ such that $r(e_n) = r(e'_n)$, we have $\mu([\ol e]) = \mu([\ol e'])$.
\end{definition}

To define a Vershik map on $X_B$, we need to introduce a linear order $\omega$ on each (finite) set $r^{-1}(v),\ v
\in V\setminus V_0$. This order defines a partial order $\omega$ on 
the sets  $E_i,\ i=0,1,...$, where edges
$e,e'$ are comparable if and only if $r(e)=r(e')$, see 
\cite{HermanPutnamSkau1992, BezuglyiKwiatkowskiYassawi2014}. 

\begin{definition}\label{Def:Ordered_GBD}
A generalized Bratteli diagram $B=(V,E)$
together with a partial order $\omega$ on $E$ is called 
\textit{an ordered generalized Bratteli diagram} $B=(V,E, \omega)$. 
\end{definition}

A (finite or infinite) path $\ov e= (e_i)$ is called 
\textit{maximal (minimal)} if every
$e_i$ is maximal (minimal) among all 
elements from $r^{-1}(r(e_i))$. Let $X_{\max}$ ($X_{\min}$) be the sets of all infinite maximal (minimal) paths in $X_B$. The sets $X_{\max}$ and $X_{\min}$ are closed sets that are always nonempty for standard Bratteli diagrams, but they can be empty for generalized Bratteli diagrams (see \cite{HermanPutnamSkau1992}, \cite{BezuglyiDooleyKwiatkowski2006} and \cite{BezuglyiJorgensenKarpelSanadhya2023}).

\begin{definition}\label{Def:VershikMap}  For an ordered 
generalized Bratteli diagram $B=(V,E, \omega)$, define a Borel 
transformation 
$\varphi_B \colon X_B \setminus X_{\max} \rightarrow X_B \setminus X_{\min}$
as follows.  Given $x = (x_0, x_1,...)\in X_B\setminus X_{\max}$, 
let $m$ be the smallest number such that $x_m$ is not maximal. Let 
$g_m$ be the successor of $x_m$ in the finite set $r^{-1}(r(x_m))$.
Set $\varphi_B(x)= (g_0, g_1,...,g_{m-1},g_m,x_{m+1},...)$
where $(g_0, g_1,..., g_{m-1})$ is the unique minimal path which starts at $V_0$ and ends in $s(g_{m})$. If  
$\varphi_B$ admits a bijective Borel extension 
to the 
entire path space $X_B$, then we call the Borel transformation 
$\varphi_B : X_B  \rightarrow X_B$ a \textit{Vershik map}, and 
the 
Borel dynamical system $(X_B,\varphi_B)$ is called a generalized 
\textit{Bratteli-Vershik} system.

\end{definition}


\section{Horizontally stationary generalized Bratteli diagrams}
\label{sect 3}
In this section, we introduce horizontally stationary 
generalized Bratteli diagrams and discuss their properties.  

\subsection{Structure of horizontally stationary Bratteli diagrams}

\begin{definition}
 Let $B = (V, E)$ be a generalized Bratteli diagram defined by the
 sequence of incidence matrices $(F_n)$. Suppose that 
 the following properties hold: for every $n\in \N_0$, we have

(i)  the vertices of $V_n$ are identified with $\Z$;

(ii) for every $i \in V_{n+1}$ and $j \in V_n$, the equality 
$f_{i,j}^{(n)} = f_{i+1,j+1}^{(n)}$ holds.

\noindent
We call such a Bratteli diagram \textit{horizontally stationary}.
\end{definition}

 Note that a horizontally stationary generalized Bratteli diagram is not necessarily stationary in the usual sense, i.e., the incidence matrices $F_n$ may be different from level to level. Remark also that the notion of a horizontally stationary Bratteli diagram does not apply to standard Bratteli diagrams with finite levels. 

One can easily verify that horizontally stationary generalized Bratteli diagrams possess the following properties.

\begin{prop}\label{Prop:prop_hor_gbd}
    Let $B = B(F_n)$ be a horizontally stationary generalized Bratteli diagram. Then for every $n \in \mathbb{N}_0$, the following statements hold:
    \begin{enumerate}
        \item for every $k,i,j \in \mathbb{Z}$, we have $f_{i,j}^{(n)} = f_{i+k,j+k}^{(n)}$,
    \item\label{prop2_hor_gbd} for every $i,j \in \mathbb{Z}$, the entry $f_{i,j}^{(n)}$ is a function of $j - i$,
    \item the matrix $F_n$ is banded,
    \item the sets $r^{-1}(i)$ and $s^{-1}(j)$ are translation equivariant, that is the geometrical structure of 
    these sets does not depend on $i\in V_{n+1}$ and $j\in V_n$,
    \item for every $n \in \N$, $i\in V_n$, and $j\in V_{n+1}$,
    $|r^{-1}(j)| = |s^{-1}(i)|$.
    \end{enumerate} 
 
\end{prop}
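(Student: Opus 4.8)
The plan is to verify each of the five statements directly from the two defining properties of a horizontally stationary diagram, namely that $V_n$ is identified with $\Z$ and that $f_{i,j}^{(n)} = f_{i+1,j+1}^{(n)}$ for all $i \in V_{n+1}$, $j \in V_n$. None of these require anything beyond the definitions, so the proof will be a short sequence of elementary observations; the only mild subtlety is item (3), where one must use assumption (iii) from Definition \ref{Def:generalized_BD} (finiteness of $r^{-1}(v)$) together with horizontal stationarity.

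First I would prove (1) by induction on $|k|$: the case $k = 1$ is property (ii) of the definition, and since $f_{i,j}^{(n)} = f_{i+1,j+1}^{(n)}$ holds for \emph{all} $i, j \in \Z$, iterating gives $f_{i,j}^{(n)} = f_{i+k,j+k}^{(n)}$ for every $k \ge 0$; applying the same identity with $i$ replaced by $i-1$, $j$ by $j-1$ handles negative $k$. Statement (2) is then immediate: fixing the value $d = j - i$, formula (1) with $k = -i$ gives $f_{i,j}^{(n)} = f_{0,\,j-i}^{(n)} = f_{0,d}^{(n)}$, so $f_{i,j}^{(n)}$ depends only on $d$. Thus there is a well-defined function $d \mapsto \varphi_n(d) := f_{0,d}^{(n)}$ with $f_{i,j}^{(n)} = \varphi_n(j-i)$, i.e.\ $F_n$ is a Toeplitz matrix.

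For (3), I would argue that by Definition \ref{Def:generalized_BD}(iii), for the vertex $v = 0 \in V_{n+1}$ the set $r^{-1}(0)$ is finite, hence $\sum_{j \in V_n} f_{0,j}^{(n)} = |r^{-1}(0)| < \infty$, so only finitely many of the values $\varphi_n(j) = f_{0,j}^{(n)}$ are nonzero; let $[a_n, b_n]$ be the smallest interval containing $\{ j : \varphi_n(j) \neq 0 \}$. By statement (2), for arbitrary $i \in V_{n+1}$ the entry $f_{i,j}^{(n)} = \varphi_n(j-i)$ is nonzero only when $j - i \in [a_n, b_n]$, i.e.\ $j \in [i + a_n, i + b_n]$; this is exactly the statement that $F_n$ is banded (with band determined by $a_n$ and $b_n$ independently of the row). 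Statement (4) is a reformulation of (1)--(2): the set $r^{-1}(i)$ of edges into $i \in V_{n+1}$ has its sources in $\{ j : \varphi_n(j-i) \neq 0 \}$ with multiplicities $\varphi_n(j-i)$, and translating $i \mapsto i+1$ translates the whole configuration by $1$ (using (1)); the analogous statement for $s^{-1}(j)$ follows because $f_{i,j}^{(n)} = f_{i+1,j+1}^{(n)}$ equally controls the edges out of $j$. Finally, for (5) I would compute both cardinalities as sums of entries of $F_n$: writing $F = F_{n-1}$ (edges between $V_{n-1}$ and $V_n$) we have, for $i \in V_n$, $|s^{-1}(i)|$ measured as edges of $E_n$ leaving $i$, which equals $\sum_{j \in V_{n+1}} f_{j,i}^{(n)} = \sum_{j} \varphi_n(j-i) = \sum_{d} \varphi_n(d)$, a quantity independent of $i$; and for $j \in V_{n+1}$, $|r^{-1}(j)| = \sum_{i \in V_n} f_{j,i}^{(n)} = \sum_i \varphi_n(i-j) = \sum_d \varphi_n(d)$, the same constant. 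Hence $|r^{-1}(j)| = |s^{-1}(i)|$, with the common value being the sum of all entries in any fixed row (equivalently column) of $F_n$.

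The only place requiring care — the ``main obstacle'', such as it is — is making the band description in (3) precise and uniform across rows, since a priori one only knows each $r^{-1}(v)$ is finite, not that the band has bounded width; but horizontal stationarity forces the nonzero pattern to be a single fixed finite set of diagonals, which resolves this at once. Everything else is a direct translation between the matrix identity $f_{i,j}^{(n)} = f_{i+1,j+1}^{(n)}$ and the combinatorial picture of edges, so the write-up should be brief.
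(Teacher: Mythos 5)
Your proof is correct; the paper itself offers no argument here (it simply asserts that these properties are ``easily verified''), and your direct verification from the Toeplitz identity $f_{i,j}^{(n)}=f_{i+1,j+1}^{(n)}$ is exactly the intended elementary route, including the one point worth care --- that finiteness of $r^{-1}(v)$ plus horizontal stationarity forces a single fixed finite set of nonzero diagonals. The only blemish is the stray introduction of $F=F_{n-1}$ in item (5), which you never use: both cardinalities there are row/column sums of $F_n$, as your computation in fact shows.
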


   The property~(\ref{prop2_hor_gbd}) of Proposition~\ref{Prop:prop_hor_gbd} means that, for all $n$, the incidence matrix $F_n$ has equal elements on diagonals, i.e.
   $$
  F_n = \begin{pmatrix}
   \ddots &  \ddots &  \ddots & \vdots & \vdots & \vdots & \udots\\
    \ldots & b_{-1}^{(n)} & b_{0}^{(n)} & b_{1}^{(n)} & \ldots\ & \ldots & \ldots\\
    \ldots & \ldots &  b_{-1}^{(n)} & b_{0}^{(n)} & b_{1}^{(n)} & \ldots & \ldots\\
    \ldots & \ldots &  \ldots & b_{-1}^{(n)} & b_{0}^{(n)} & b_{1}^{(n)} & \ldots\\
    \udots &  \vdots &  \vdots & \vdots & \ddots & \ddots & \ddots\\
  \end{pmatrix}, 
   $$
where $b_k^{(n)} = f_{i,j}^{(n)}$ for $k = j - i$.

\begin{remark}
Fix $n \in \mathbb{N}_0$ and let $i \in V_{n+1}$. Let $j_1 \in V_n$ be the source of the leftmost edge and $j_2\in V_n$ be 
the source of the rightmost edge in the set $r^{-1}(i)$. 
Observe that
$j_2 \geq j_1$ and
$j_2 - j_1$ does not depend on $i$. This means that the width of the band for $F_n$ is $j_2 - j_1 + 1$. Throughout the paper, we will be interested in the non-degenerate case $j_2 > j_1$.
Let $p = i - j_1$ and $q = j_2 - i$.
In general, $p\neq q$. If $p=q$ (or $i - j_1 = j_2 - i$), then $B$ is 
a generalized Bratteli diagram of bounded size with parameters 
$t_n = i - j_1$ and $L_n = |r^{-1}(i)|$. Then we can represent $F_n$ in the form
\be\label{eq3:Toeplitz m-x}
  F_n = \begin{pmatrix}
   \ddots &  \vdots &  \ddots & \vdots & \ddots & \vdots & \vdots & \vdots & \udots\\
   \ldots  & b_{-p}^{(n)} & \ldots & b_{0}^{(n)} & \ldots & b_{q}^{(n)} & 0 & 0 & \ldots\\
    \ldots & 0 & b_{-p}^{(n)} & \ldots & b_{0}^{(n)} & \ldots & b_{q}^{(n)} & 0 & \ldots\\
    \ldots & 0 & 0 & b_{-p}^{(n)} & \ldots & b_{0}^{(n)} & \ldots & b_{q}^{(n)} & \ldots\\
    \udots &  \vdots &  \vdots & \vdots & \ddots & \vdots & \ddots & \vdots & \ddots\\
  \end{pmatrix}.
\ee
\end{remark}

Matrices satisfying \eqref{eq3:Toeplitz m-x} are called \textit{infinite Toeplitz} matrices.

\begin{lemma}
Let $B= B(F_n)$ be a horizontally stationary generalized Bratteli diagram. Then the incidence matrices $F_n$ satisfy the equal row sum
$ERS(r_n)$ property and the equal column sum $ECS(c_n)$ property. 
Moreover, $r_n = c_n$ where $r_n$ and $c_n$ are the sums of rows and
columns, respectively.  
\end{lemma}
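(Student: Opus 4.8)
The plan is to exploit the Toeplitz structure directly: since $B$ is horizontally stationary, by Proposition~\ref{Prop:prop_hor_gbd}(\ref{prop2_hor_gbd}) the entry $f^{(n)}_{i,j}$ depends only on $k = j-i$, so write $b^{(n)}_k := f^{(n)}_{i,j}$ where $k = j-i$. First I would observe that for any fixed row index $i \in V_{n+1}$,
\[
\sum_{j \in V_n} f^{(n)}_{i,j} \;=\; \sum_{k \in \Z} b^{(n)}_k,
\]
which is manifestly independent of $i$; since $B$ is banded (Proposition~\ref{Prop:prop_hor_gbd}(3)) only finitely many $b^{(n)}_k$ are nonzero, so this sum is a finite number. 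Call it $r_n$. This establishes $F_n \in ERS(r_n)$. Symmetrically, for any fixed column index $j \in V_n$,
\[
\sum_{i \in V_{n+1}} f^{(n)}_{i,j} \;=\; \sum_{k \in \Z} b^{(n)}_{k}
\]
(reindex by $k = j - i$, so $i = j - k$ ranges over $V_{n+1}$ as $k$ ranges over $\Z$), which is the \emph{same} finite sum $\sum_k b^{(n)}_k$. This gives $F_n \in ECS(c_n)$ with $c_n = r_n$, proving both the $ERS$/$ECS$ claims and the equality $r_n = c_n$ simultaneously.

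The only genuine subtlety — and what I'd flag as the ``main obstacle'', though it is minor — is the bookkeeping on indices of summation: one must be careful that in the column sum the index $i$ runs over all of $V_{n+1} = \Z$, so that the substitution $k = j-i$ is a genuine bijection $\Z \to \Z$ and the two sums $\sum_k b^{(n)}_k$ literally coincide rather than merely being translates of one another. Because each $V_n$ is identified with all of $\Z$ (no boundary effects, unlike the finite-level case), this is clean; it is precisely the point where horizontal stationarity over the full bi-infinite level does the work. I would also remark, for completeness, that in terms of the band representation~\eqref{eq3:Toeplitz m-x} one has $r_n = c_n = \sum_{k=-p}^{q} b^{(n)}_k = |r^{-1}(i)|$ counted with multiplicity, tying the constant to the quantity $L_n$ from Definition~\ref{Def:BD_bdd_size}.

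A brief alternative phrasing of the same argument, if one prefers to lean on earlier results: the $ERS(r_n)$ property is immediate from Proposition~\ref{Prop:prop_hor_gbd}(5), which already records that $|r^{-1}(i)|$ is independent of $i$ — and $\sum_j f^{(n)}_{i,j}$ is exactly $|r^{-1}(i)|$ (counting edges with multiplicity). For the $ECS$ property and the equality of the constants, one invokes the shift-equivariance in Proposition~\ref{Prop:prop_hor_gbd}(4): the set $s^{-1}(j)$ has the same geometric structure as $r^{-1}(i)$, and transposing a Toeplitz band matrix yields again a Toeplitz band matrix whose diagonal values are a reindexing of the original, so column sums equal row sums. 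Either route is short; I would present the direct computation with $b^{(n)}_k$ as the cleanest, since it makes the identity $r_n = c_n$ transparent in one line.
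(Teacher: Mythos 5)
Your proof is correct and follows essentially the same route as the paper, which simply observes that by the banded Toeplitz form \eqref{eq3:Toeplitz m-x} every row sum and every column sum equals the same finite quantity $\sum_{k=-p}^{q} b_k^{(n)}$. Your reindexing $k=j-i$ is exactly the bookkeeping behind the paper's one-line justification, so nothing further is needed.
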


This statement is obvious and follows from \eqref{eq3:Toeplitz m-x} 
because 
\begin{equation}\label{eq:ERS+ECS}
r_n  = \sum_{i = -p}^{q} b_i^{(n)} = c_n.
\end{equation} 

\begin{remark}
We observe that if every incidence matrix $F_n$ of a Bratteli diagram $B$ has the ERS property, then this does not mean that $B(F_n)$ is
horizontally stationary. 
In fact, we note that the class of horizontally stationary generalized Bratteli diagrams is a proper subset of the set of Bratteli diagrams $B$ 
such that $B \in ECS \cap ERS$. To see this, we provide the following example, see Figure \ref{Fig:ERSECSnotHS}.

\begin{figure}
\unitlength=0,7cm
\begin{graph}(11,4)
 \graphnodesize{0.2}
 \roundnode{V11}(2,3)
 \roundnode{V12}(4,3)
 \roundnode{V13}(6,3)
 \roundnode{V14}(8,3)
  \roundnode{V15}(10,3)
 \roundnode{V21}(2,1)
 \roundnode{V22}(4,1)
 \roundnode{V23}(6,1)
  \roundnode{V24}(8,1)
    \roundnode{V25}(10,1)
  %
 %
 \graphlinewidth{0.025}

 \edge{V21}{V11}
  \edge{V21}{V11}
    \edge{V22}{V11}
    \edge{V21}{V12}
     
 \edge{V22}{V12}
 \edge{V22}{V12}

 \edge{V23}{V13}
  \edge{V23}{V13}

 \bow{V23}{V12}{0.09}
    \bow{V22}{V13}{-0.09}
    
 \bow{V23}{V12}{-0.09}
    \bow{V22}{V13}{0.09}
    
     \edge{V24}{V14}
  \edge{V24}{V14}
  
   \edge{V24}{V13}
    \edge{V23}{V14}
    
         \edge{V25}{V15}
  \edge{V25}{V15}
  
   \bow{V25}{V14}{0.09}
      \bow{V25}{V14}{-0.09}
    \bow{V24}{V15}{0.09}
        \bow{V24}{V15}{-0.09}

 \freetext(10.9,3){$\ldots$}
  \freetext(10.9,1){$\ldots$}
   \freetext(1,3){$\ldots$}
  \freetext(1,1){$\ldots$}
  \freetext(2,0.5){$\vdots$}
  \freetext(4,0.5){$\vdots$}
    \freetext(6,0.5){$\vdots$}
      \freetext(8,0.5){$\vdots$}  
        \freetext(10,0.5){$\vdots$} 
    
\end{graph}

\caption{Stationary generalized Bratteli diagram $B$ in $ERS(4)$ and $ECS(4)$ which is not horizontally stationary.}\label{Fig:ERSECSnotHS}
\end{figure}

\end{remark}

\begin{remark}\label{rem: heights}
(1) The product of two infinite Toeplitz matrices, $F$ and $F'$, is Toeplitz again. 
Moreover, if $F \in ERS(r)$ and $F' \in ERS(r')$, then $FF' \in 
ERS(rr')$.

(2) Let $B= B(F_n)$ be a horizontally stationary generalized Bratteli diagram with $F_n \in ERS(r_n)$.  Then, for every $j \in V_n$, 
\be\label{eq3:product r_i}
H_j^{(n)} = r_0 \ \cdots \ r_{n - 1}.
\ee
\end{remark}

Using Remark \ref{rem: heights}, we can
define the stochastic matrix $\tl F_n$ for a horizontally stationary 
$B = B(F_n)$:
\be\label{eq3:stoch}
\tl f^{(n)}_{ij} = f^{(n)}_{ij} \frac{H_j^{(n)}}{H_i^{(n+1)}} = f^{(n)}_{ij} \frac{1}{r_n}.
\ee
Note that both $F_n$ and $\tl F_n$ are Toeplitz matrices.

We give an easily verified criterion of horizontal stationarity of 
a generalized Bratteli diagram. 

Let $T =(t_{ij})$ be the infinite matrix such that 
$$
t_{ij} = \begin{cases}
1, & j= i+1, \ i \in \Z\\
0, & \mbox{otherwise}.
\end{cases}
$$
Let $x = (x_k)$. Then $T(x) = y$ where $y_k = x_{k+1}$.

\begin{lemma}
A generalized Bratteli diagram $B = B(F_n)$ is horizontally stationary 
if and only if $F_n T = TF_n$ for all $n \in \N_0$.
\end{lemma}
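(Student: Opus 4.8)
The plan is to unwind both conditions into statements about the matrix entries and observe that they coincide. First I would recall that for an infinite matrix $F = (f_{ij})$ indexed by $\Z \times \Z$, the product $FT$ has entries $(FT)_{ij} = \sum_k f_{ik} t_{kj} = f_{i,j-1}$, since $t_{kj} = 1$ exactly when $k = j-1$. Similarly $(TF)_{ij} = \sum_k t_{ik} f_{kj} = f_{i+1,j}$, since $t_{ik} = 1$ exactly when $k = i+1$. Thus the identity $F_n T = T F_n$ holds entrywise precisely when $f^{(n)}_{i,j-1} = f^{(n)}_{i+1,j}$ for all $i,j \in \Z$, and replacing $j$ by $j+1$ this reads $f^{(n)}_{i,j} = f^{(n)}_{i+1,j+1}$ for all $i,j$, which is exactly condition (ii) in the definition of horizontal stationarity. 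So the two sides of the equivalence are literally the same statement once the matrix products are computed.

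The only genuine subtlety — and the step I would flag as the main point requiring care rather than a real obstacle — is that these are infinite matrices, so one must check the products $F_nT$ and $TF_n$ are well defined. This is immediate: each row and each column of $T$ has exactly one nonzero entry, so for any matrix $F$ the sums defining $(FT)_{ij}$ and $(TF)_{ij}$ are single-term sums, hence trivially convergent. No boundedness or finiteness of bands is needed for the equivalence itself; in particular we do not even need to invoke that $F_n$ is banded. (Of course, for a genuine incidence matrix of a generalized Bratteli diagram each column of $F_n$ is already finitely supported, but this is not required here.)

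I would therefore present the proof as follows: fix $n$, compute $(F_nT)_{ij} = f^{(n)}_{i,j-1}$ and $(TF_n)_{ij} = f^{(n)}_{i+1,j}$ directly from the definition of $T$, conclude that $F_nT = TF_n$ is equivalent to $f^{(n)}_{i,j-1} = f^{(n)}_{i+1,j}$ for all $i,j \in \Z$, and note that by the substitution $j \mapsto j+1$ this is equivalent to $f^{(n)}_{i,j} = f^{(n)}_{i+1,j+1}$ for all $i,j \in \Z$. Since this must hold for every $n \in \N_0$, the statement $F_nT = TF_n$ for all $n$ is equivalent to horizontal stationarity of $B = B(F_n)$. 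One could add the interpretive remark that $T$ is the horizontal shift operator (matching the action $T(x)_k = x_{k+1}$ recorded just before the lemma), so the lemma says precisely that a diagram is horizontally stationary if and only if every incidence matrix commutes with the shift, which is the operator-theoretic restatement of the entrywise translation invariance.
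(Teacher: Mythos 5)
Your computation is correct and is exactly the argument the paper has in mind; the paper's own proof simply says ``Straightforward,'' so your write-up supplies the entrywise verification $(F_nT)_{ij}=f^{(n)}_{i,j-1}$, $(TF_n)_{ij}=f^{(n)}_{i+1,j}$ that the authors leave implicit. No issues.
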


\begin{proof}
Straightforward.
\end{proof}

\begin{remark}\label{rem parallel}
Let $B$ be a horizontally stationary generalized Bratteli diagram
with the vertices indexed by $\Z$ at every level. Denote by 
$\tau$ the shift on $\Z$: $\tau (i) = i+1$, $i\in \Z$. Then $\tau$
generates a transformation acting on the set of all edges of $B$:
$\tau(e)$ is the edge such that $s(\tau (e)) = \tau (s(e))$ and 
$r(\tau (e)) = \tau (r(e))$. 
Two edges, $e$ and $f$ from $E_n$, are called \textit{parallel }
if there exists $k \in Z$ such that $s(f) = \tau^k(s(e))$ 
and  $r(f) = \tau^k(r(e))$. Two paths, $\ol x = (x_n)$ and $\ol y =
(y_n)$, are called parallel if $x_n$ is parallel to
$y_n$ for every $n$.
\end{remark}

We finish this subsection by  
discussing the connection between horizontally stationary 
and bounded size Bratteli diagrams.

\begin{definition}
    We say that a generalized Bratteli diagram of bounded size is \textit{full} if for every $n \geq 0$ and every $i \in V_{n+1}$ we have
    $$
   s(r^{-1}(i)) = [- t_n + i, t_n + i],
    $$
    which means that there are edges between $i \in V_{n+1}$ and every vertex from $\{i - t_n, \ldots, i + t_n\} \subset V_n$.
\end{definition}

Thus, every horizontally stationary Bratteli diagram is an edge subdiagram of a full generalized Bratteli diagram of bounded size (the notion of
edge and vertex subdiagrams can be found in \cite{BezuglyiKarpelKwiatkowski2024}, for example).

\begin{remark}
(i) One can check that the product of two Toeplitz matrices is a Toeplitz matrix. Indeed, let $F_n$ and $F_{n+1}$ be Toeplitz. Then for every $i,j \in \mathbb{Z}$, we have
$$
(F_{n+1}F_n)_{ij} = \sum_{k \in \mathbb{Z}} f^{(n+1)}_{ik}f^{(n)}_{kj} = \sum_{k \in \mathbb{Z}} f^{(n+1)}_{i+1,k+1}f^{(n)}_{k+1,j+1} =  (F_{n+1}F_n)_{i+1,j+1}.
$$ 

(ii) If $B$ is a horizontally stationary generalized Bratteli diagram then for every $i, j \in \mathbb{Z}$ we have
$$
s(r^{-1}(i + 1)) = s(r^{-1}(i)) + 1, \quad r(s^{-1}(i + 1)) = r(s^{-1}(i)) + 1,
$$
where for $S \subset \mathbb{Z}$, by $S + 1$ we mean $\{s + 1 : s \in S\}$.

(iii) If for every $n \geq 0$ and every $i \in \mathbb{Z}$ there exists $k \in \mathbb{Z}$ such that 
$$
f^{(n+1)}_{ik}f^{(n)}_{ki} > 0
$$
then after telescoping with respect to even levels for every vertex $i$ there are infinite vertical paths passing through $i$.
\end{remark}

\begin{remark}
    In \cite{BezuglyiJorgensenKarpelSanadhya2023}, the notion of isomorphism of generalized Bratteli diagrams was discussed (see e.g. \cite{Durand2010} for standard Bratteli diagrams). Isomorphism preserves many properties of Bratteli diagrams such as the set of tail invariant measures.     For any sequence $\{k_n\}_{n = 0}^{\infty} \subset \mathbb{Z}$, the property of horizontal stationarity is preserved under isomorphisms which shift vertices of each level, $g_n(w) = w + k_n$, and change correspondingly edges and their ordering. In particular, after such isomorphism, we can always assume that the diagram has vertical infinite maximal paths.
\end{remark}

\subsection{On tail invariant measures}

Let $B = B(F_n)$ be a horizontally stationary generalized Bratteli diagram. According to \eqref{eq3:Toeplitz m-x}, we can briefly 
write the incidence matrix $F_n$ as the doubly infinite sequence
$ b^{(n)} = [..., 0 , b^{(n)}_{-p}, \ldots , b^{(n)}_{q},0 ...]$ 
where $p = p(n)$, $q = q(n)$. 

More generally, we will consider infinite
sequences $ \alpha = (\alpha_k)_{k \in \Z}$ of real positive numbers. 
Take the formal Fourier series corresponding to $\alpha$:
\be\label{eq3:Fourier}
\wh \alpha(t):= \sum_{n\in \Z} \alpha_n e^{int}, \ t \in [-\pi, \pi].
\ee

If $\alpha$ has only finitely many non-zero entries, then the series in \eqref{eq3:Fourier} is, in fact, 
a trigonometric polynomial. If $\alpha \in \ell^1$, then the series 
in \eqref{eq3:Fourier} converges for all $t$. 

We note that the map $\alpha \mapsto \wh \alpha$ sends infinite 
vectors to scalar functions. Moreover, there is 
a one-to-one correspondence between function $\wh\alpha(t) \in 
L^2[-\pi, \pi]$ and sequences $\alpha$ of coefficients in 
the series \eqref{eq3:Fourier}.

Suppose that $F = (f_{ij})$ is a doubly infinite Toeplitz matrix whose diagonals
are formed by the numbers $(\alpha_k)$, i.e.,  $f_{ij} = \alpha_{i-j} $.
Then, for $x = (x_i)$, we have 
\be\label{eq3 convolution}
(F x)_i = \sum_{j} f_{ij} x_j = \sum_j \alpha_{i-j} x_j = 
(\alpha \star x)_i
\ee
where $\star$ denotes the convolution operation. 

\begin{remark}\label{rem F and conv}
It is well-known and can be easily checked that 
  \eqref{eq3:Fourier} and \eqref{eq3 convolution} imply the equality
$$
\wh{(\alpha \star \beta)}(t) = \wh{\alpha}(t) \cdot \wh{\beta}(t).
$$

Moreover, the converse is also true. Let $\wh\alpha$ and $\wh\beta$
be two functions from $L^2[-\pi, \pi]$ and $\alpha = (\alpha_k)$,
$\beta = (\beta_k)$ the corresponding sequences of 
Fourier coefficients. Then the sequence $\alpha \star \beta$ corresponds to the function 
$\wh\alpha \cdot \wh\beta$. 

\end{remark}

Apply Remark \ref{rem F and conv} to the case of a horizontally stationary 
generalized Bratteli diagram $B(F_n)$. We recall the following result
about tail invariant measures on the path space of a Bratteli diagram (see \cite[Theorem 2.9]{BezuglyiKwiatkowskiMedynetsSolomyak2010}, \cite[Theorem 2.3.2]{BezuglyiJorgensen2022}).

\begin{thm}\label{thm:mu_as_inv_lim}\label{BKMS_measures=invlimits}
Consider a Bratteli diagram (generalized or classical) $B = (V,E)$ 
 with the sequence of incidence matrices $(F_n)$. The following statements
 hold:
\begin{enumerate}

\item Let  $\mu$ be a tail invariant measure on $B$ which takes finite values on all cylinder sets. Define the sequences of vectors $p^{(n)} = \langle p^{(n)}_w : w \in V_n \rangle$, where 
\begin{equation}\label{eq:def_p_n}
    p^{(n)}_w= \mu([\ov e]),\ \  r(\ol e) = w, \ \ w\in V_n. 
\end{equation} 
Then the vectors $ p^{(n)}$ satisfy the relations 
\begin{equation}\label{eq:formula_p_n}
(F_n)^{T} p^{(n+1)} = p^{(n)},  \quad n\geq 0, 
\end{equation}

\item Suppose that $\{ p^{(n)}= (p_w^{(n)}) 
\}_{n \in \mathbb{N}_0}$ is a sequence of non-negative vectors 
satisfying \eqref{eq:formula_p_n}.
Then there exists a uniquely determined tail invariant measure $\mu$ such that $ \mu([\ov e]) = p_w^{(n)}$ for 
$w\in V_n$, and every path $\ol e$ ending at $w$, $n \in \mathbb{N}_0$.
\end{enumerate}
\end{thm}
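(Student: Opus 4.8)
For part (1), the plan is to read off the relation directly from the obvious refinement of cylinder sets. First, tail invariance guarantees that $p^{(n)}_w := \mu([\ol e])$ is well defined, i.e. independent of the path $\ol e$ with $r(\ol e)=w$. Now fix $w\in V_n$ and a path $\ol e$ ending at $w$. Every infinite path in $[\ol e]$ is obtained by appending one more edge $e'$ with $s(e')=w$ and then continuing; grouping such edges $e'$ by their range $v=r(e')\in V_{n+1}$ gives the at most countable partition
\be
[\ol e] \;=\; \bigsqcup_{v\in V_{n+1}}\ \bigsqcup_{e'\in E(v,w)} [\ol e\, e'] ,
\ee
where $[\ol e\, e']$ denotes the cylinder of the extended path and $|E(v,w)| = f^{(n)}_{v,w}$. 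Each $[\ol e\, e']$ ends at $v$, hence has measure $p^{(n+1)}_v$, and since $\mu$ is countably additive with all terms finite,
\be
p^{(n)}_w \;=\; \mu([\ol e]) \;=\; \sum_{v\in V_{n+1}} f^{(n)}_{v,w}\, p^{(n+1)}_v \;=\; \big((F_n)^{T} p^{(n+1)}\big)_w ,
\ee
which is \eqref{eq:formula_p_n}. The only things to verify carefully are that the displayed union really is a partition of $[\ol e]$ and that the (possibly infinite) rearrangement is legitimate, which it is by non-negativity.

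For part (2), I would first declare $\mu([\ol e]) := p^{(n)}_w$ for every $\ol e$ ending at $w\in V_n$ and then show this extends to a Borel measure. The consistency required for such an assignment to come from a measure is exactly that $\mu([\ol e])$ equal the sum of its values on the one-level refinement above, i.e. $p^{(n)}_w=\sum_v f^{(n)}_{v,w}p^{(n+1)}_v$ — and this is precisely the hypothesis \eqref{eq:formula_p_n}. Iterating gives additivity under refinement to any fixed deeper level.

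To promote this to a genuine Borel measure I would avoid compactness of $X_B$ (which fails in the generalized setting) and instead use transition kernels together with the Ionescu--Tulcea theorem. On the clopen set $X^{(0)}_{v_0}$ of paths starting at a vertex $v_0\in V_0$ with $p^{(0)}_{v_0}>0$, set, for an edge $e'$ with $s(e')=w\in V_n$,
\be
\mathbb P\big(\text{next edge is } e' \,\big|\, \text{current vertex is }w\big) \;=\; \frac{p^{(n+1)}_{r(e')}}{p^{(n)}_w};
\ee
summing over the outgoing edges of $w$ and grouping by range yields $\sum_v f^{(n)}_{v,w}p^{(n+1)}_v/p^{(n)}_w = 1$ by \eqref{eq:formula_p_n}, so these are honest probability kernels on the (discrete, hence standard Borel) edge sets. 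Ionescu--Tulcea then produces a unique probability measure $\nu_{v_0}$ on $X^{(0)}_{v_0}$ whose value on a cylinder $[\ol e]$ with $s(e_0)=v_0$, $r(\ol e)=w\in V_n$, telescopes to $p^{(n)}_w/p^{(0)}_{v_0}$ (vertices $w$ with $p^{(n)}_w=0$ are reached with $\nu_{v_0}$-probability $0$, so the kernel there is irrelevant). Putting $\mu:=\sum_{v_0\in V_0} p^{(0)}_{v_0}\,\nu_{v_0}$, a countable sum of finite Borel measures and hence a well-defined $\sigma$-finite Borel measure, one gets $\mu([\ol e])=p^{(n)}_w$ on every cylinder, and tail invariance because $p^{(n)}_w$ depends only on the endpoint $w$. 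Uniqueness is standard: two tail invariant $\sigma$-finite Borel measures agreeing on the cylinders agree on the $\sigma$-algebra they generate, which is the Borel $\sigma$-algebra of $X_B$, by a $\pi$--$\lambda$ argument.

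The main obstacle is exactly this last step: because $X_B$ need not be locally compact, a finitely additive set function on cylinders is not automatically $\sigma$-additive, and the usual compactness shortcut is unavailable. The device that overcomes this is the hierarchical (tree) structure of finite paths combined with the fact, forced by \eqref{eq:formula_p_n}, that each one-level refinement carries total conditional mass $1$; stated probabilistically this is exactly the hypothesis of the Ionescu--Tulcea theorem, which supplies $\sigma$-additivity for free. A minor extra point, if one insists that "measure" mean non-atomic, is that $\mu(\{x\}) = \lim_n p^{(n+1)}_{r(x_n)}$, which vanishes whenever the diagram genuinely branches; this may be recorded separately or absorbed into a non-degeneracy hypothesis.
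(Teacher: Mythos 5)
Your argument is correct, but note that the paper itself does not prove this statement: it is recalled as a known result with citations to \cite{BezuglyiKwiatkowskiMedynetsSolomyak2010} and \cite{BezuglyiJorgensen2022}, so there is no in-paper proof to compare against line by line. On the merits: part (1) is exactly the standard one-step refinement argument --- the disjoint decomposition of $[\ol e]$ over appended edges, grouping by range, is a genuine countable partition, and non-negativity justifies the rearrangement, so $(F_n)^T p^{(n+1)} = p^{(n)}$ follows. For part (2) you correctly identify the real issue in the generalized setting, namely that $X_B$ is not locally compact, so one cannot invoke the usual compactness argument to upgrade finite additivity on cylinders to $\sigma$-additivity; routing the construction through Markov transition kernels $p^{(n+1)}_{r(e')}/p^{(n)}_w$ (which are stochastic precisely because of \eqref{eq:formula_p_n}) and Ionescu--Tulcea is a legitimate and clean way to obtain the measure, and your handling of the degenerate vertices with $p^{(n)}_w=0$, the $\sigma$-finiteness via the decomposition over $V_0$, and the $\pi$--$\lambda$ uniqueness on each finite-measure piece $X^{(0)}_{v_0}$ are all sound. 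The cited sources proceed somewhat differently in spirit (describing tail invariant measures as points of an inverse limit of simplices under the maps $F_n^T$, which is the viewpoint this paper later uses in Theorem \ref{thm all erg meas}), but the content is the same and your kernel construction arguably makes the existence step more transparent. Your closing caveat about atoms is also apt, since the paper's blanket convention that ``measure'' means non-atomic is not automatic from \eqref{eq:formula_p_n} alone.
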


Let $F_n^T = A_n$. Using \eqref{eq3 convolution}, we rewrite \eqref{eq:formula_p_n}
in the form 
$$
(A_n  p^{(n+1)})_i =  (a^{(n)} \star  p^{(n+1)})_i = p^{(n)}_i.
$$
Here the finite sequence $a^{(n)}$ is formed by the numbers 
$a^{(n)}_k$
such that $a^{(n)}_k = b^{(n)}_{-k}$, see \eqref{eq3:Toeplitz m-x}.

The following statement is a version of Theorem \ref{BKMS_measures=invlimits}. 

\begin{prop} \label{prop inv meas}
Let $B$ be a horizontally stationary generalized Bratteli diagram, and 
the matrices $A_n$ are defined by their entries on 
the diagonals $a^{(n)} = (a^{(n)}_k)_k$.
Let $\wh a^{(n)}$ and $\wh p^{(n)}$ be the Fourier transforms 
corresponding to the sequences $a^{(n)}$ and $ p^{(n)}$, $n \geq 0$.
The collection of infinite vectors $(p^{(n)})$ defines a tail invariant measure $\mu$ if and only if the functions 
$\wh p^{(n)}(t)$ satisfy the relations 
$$
\wh a^{(n)}(t)  \wh p^{(n+1)}(t) =
\wh p^{(n)}(t)
$$
for all $n \geq 0$.

\end{prop}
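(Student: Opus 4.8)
The plan is to reduce the statement to the characterization of tail invariant measures in Theorem \ref{BKMS_measures=invlimits} and then to translate the matrix relation \eqref{eq:formula_p_n} into the Fourier picture using \eqref{eq3 convolution} and Remark \ref{rem F and conv}. By Theorem \ref{BKMS_measures=invlimits}, a sequence $(p^{(n)})$ of non-negative infinite vectors determines a tail invariant measure $\mu$ on $X_B$ (with $\mu([\ol e]) = p^{(n)}_w$ whenever $r(\ol e) = w \in V_n$) if and only if $(F_n)^{T} p^{(n+1)} = p^{(n)}$ for all $n \ge 0$, i.e. $A_n p^{(n+1)} = p^{(n)}$ with $A_n = F_n^T$. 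So it suffices to prove that this system of linear relations is equivalent to the system $\wh a^{(n)}(t)\, \wh p^{(n+1)}(t) = \wh p^{(n)}(t)$, $n \ge 0$.

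First I would record that, since $B$ is horizontally stationary, $F_n$ and hence $A_n = F_n^T$ is a doubly infinite Toeplitz matrix whose diagonals are formed by the finite sequence $a^{(n)} = (a^{(n)}_k)$, $a^{(n)}_k = b^{(n)}_{-k}$, see \eqref{eq3:Toeplitz m-x}. Because $a^{(n)}$ has only finitely many nonzero entries, the convolution $a^{(n)} \star p^{(n+1)}$ is a well-defined infinite vector (each coordinate is a finite sum) for an arbitrary infinite vector $p^{(n+1)}$, and \eqref{eq3 convolution} gives $(A_n p^{(n+1)})_i = (a^{(n)} \star p^{(n+1)})_i$ for every $i \in \Z$. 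Hence $A_n p^{(n+1)} = p^{(n)}$ holds if and only if $a^{(n)} \star p^{(n+1)} = p^{(n)}$ as infinite vectors.

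Next I would pass to Fourier transforms. The correspondence $\alpha \mapsto \wh\alpha$ is one-to-one on infinite vectors (two vectors with the same formal Fourier series have the same coefficients), so $a^{(n)} \star p^{(n+1)} = p^{(n)}$ is equivalent to $\wh{(a^{(n)} \star p^{(n+1)})}(t) = \wh{p^{(n)}}(t)$. By Remark \ref{rem F and conv}, and using that $\wh a^{(n)}$ is a trigonometric polynomial so that the product is unambiguous irrespective of the summability of $p^{(n+1)}$, the left-hand side equals $\wh a^{(n)}(t)\, \wh p^{(n+1)}(t)$. Chaining the three equivalences yields that $(p^{(n)})$ defines a tail invariant measure if and only if $\wh a^{(n)}(t)\, \wh p^{(n+1)}(t) = \wh p^{(n)}(t)$ for all $n \ge 0$, as claimed.

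The only delicate point I expect is the meaning of the Fourier identity when $p^{(n)} \notin \ell^1$ (or $\ell^2$): it must be read as an identity of formal trigonometric series, equivalently coefficient-by-coefficient after multiplication by the polynomial $\wh a^{(n)}$, and one must observe that multiplication by a trigonometric polynomial is precisely convolution by the finitely supported sequence $a^{(n)}$, which is always legitimate. Once this interpretational matter is settled, the argument is entirely a combination of Remark \ref{rem F and conv} with the injectivity of $\alpha \mapsto \wh\alpha$, and no further estimates are needed.
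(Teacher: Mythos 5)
Your proof is correct and follows essentially the same route as the paper, which simply cites Theorem \ref{BKMS_measures=invlimits}, Remark \ref{rem F and conv}, and \eqref{eq3 convolution}; your version just spells out the chain of equivalences (matrix relation $\Leftrightarrow$ convolution $\Leftrightarrow$ Fourier product) and the injectivity of $\alpha \mapsto \wh\alpha$. The extra care you take about interpreting the identity when $p^{(n)}$ is not summable is a reasonable clarification of a point the paper leaves implicit.
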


\begin{proof}
This result follows immediately from Theorem \ref{BKMS_measures=invlimits}, Remark \ref{rem F and conv}, and 
\eqref{eq3 convolution}
\end{proof}

\section{Ergodic probability tail invariant measures}\label{sect 4}

In this section, we discuss the measure extension from odometers and ECS subdiagrams for horizontally stationary generalized Bratteli diagrams. We describe explicitly the set of all 
ergodic probability tail invariant measures for horizontally stationary generalized Bratteli diagrams that belong to the class $\mathcal C$ (see \ref{eq4: 3-diag inc matr}). We also study Markov measures and horizontally invariant measures on horizontally stationary generalized Bratteli diagrams.

\subsection{Measure extension from an odometer for horizontally 
stationary diagrams}

Let $B$ be a horizontally stationary generalized Bratteli diagram with the incidence matrices $F_n = (f^{(n)}_{ij})$ of the form
 $$
  F_n = \begin{pmatrix}
   \ddots &  \vdots &  \ddots & \ddots & \ddots & \vdots & \ddots & \vdots & \vdots & \vdots & \udots\\
   \ldots  & b_{-m}^{(n)} & \ldots & b_{-1}^{(n)} & a_n & b_{1}^{(n)} & \ldots & b_{k}^{(n)} & 0 & 0 & \ldots\\
    \ldots & 0 & b_{-m}^{(n)} & \ldots & b_{-1}^{(n)} & a_n & b_{1}^{(n)} & \ldots & b_{k}^{(n)} & 0 & \ldots\\
    \ldots & 0 & 0 & b_{-m}^{(n)} & \ldots & b_{-1}^{(n)} & a_n & b_{1}^{(n)}  & \ldots & b_{k}^{(n)} & \ldots\\
    \udots &  \vdots &  \vdots & \vdots & \ddots & \vdots & \ddots & \ddots & \ddots & \vdots & \ddots\\
  \end{pmatrix}, 
   $$
where $k = k (n)$ and $m = m(n)$, and $k,m > 0$. We write here $a_n = b_{0}^{(n)}$. 
Let $\ov i = (i_n)$ be a sequence of integers such that for every $n \in \mathbb{N}_0$ there exist edges between $i_n \in V_n$ and $i_{n+1} \in V_{n+1}$. Denote by $B(\ov i)$ the odometer which is a vertex subdiagram of $B$ supported by vertices from $\ov i$. Let $\mu_{\ov i}$ be a unique probability tail invariant measure on the path space $X_{B(\ov i)}$. Then the measure $\mu_{\ov i}$ is defined by its values on cylinder sets
$$
\mu_{\ov i}([\ov e^{(n)}]) = \frac{1}{f^{(0)}_{i_1i_0} \ \cdots \ f^{(n-1)}_{i_ni_{n-1}}},
$$
where $[\ov e^{(n)}]$ is a cylinder set generated by a finite path $\ov e^{(n)}$ in $\ov B(\ov i)$ which ends at the vertex $i_n$ on level $n$. 
As was discussed in our earlier papers, such a measure admits 
a natural extension by tail invariance onto the smallest tail 
invariant set $\wh X_{B(\ov i)} = \mathcal R(X_{B(\ov i)})$ containing $X_{B(\ov i)}$. In other words, we extend the measure from a
section of the countable Borel equivalence relation $\mathcal{R}$ to its saturation. Details can be found, for example, in 
\cite{AdamskaBKK2017}, \cite{BezuglyiKarpelKwiatkowski2024}, \cite{Kechris2024}. 
The measure $\mu_{\ov i}$ is automatically extended to a measure 
$\wh{\mu}_{\ov i}$ on $\wh X_{B(\ov i)}$. 

We will answer here the following principal \textit{question}: under what conditions is the extension $\wh{\mu}_{\ov i}(\wh X_{B(\ov i)})$ finite? 

\begin{theorem}\label{Thm:ext_from_odom}
Let $B(F_n)$, $B(\ov i)$ and $\mu_{\ov i}$ be as above, and let 
$$
\sigma^{(n)} = \sum_{j\neq i_n}f^{(n)}_{i_{n+1},j}.
$$
Then 
\be\label{eq4:ext from odom}
\wh{\mu}_{\ov i}(\wh X_{B(\ov i)}) < \infty \ \ \Longleftrightarrow \ \ 
\sum_{n = 0}^{\infty} \frac{\sigma^{(n)}}{f^{(n)}_{i_{n+1}i_{n}}} < \infty. 
\ee
\end{theorem}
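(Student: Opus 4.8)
The plan is to compute $\wh{\mu}_{\ov i}(\wh X_{B(\ov i)})$ explicitly by decomposing the saturation $\wh X_{B(\ov i)} = \mathcal{R}(X_{B(\ov i)})$ level by level. For each $n$, consider a path $x \in X_{B(\ov i)}$: its first $n$ edges live over the vertices $i_0, i_1, \dots, i_n$. A path $y$ is tail equivalent to such an $x$ but ``branches off'' exactly at level $n$ if $y$ agrees with $x$ from level $n+1$ onward but the edge $y_n$ into $i_{n+1}$ has source $j \neq i_n$. First I would set up, for each $n$, the clopen set $Y_n \subset \wh X_{B(\ov i)}$ consisting of those paths that first deviate from $X_{B(\ov i)}$ by using an edge into $i_{n+1}$ whose source differs from $i_n$; these sets, together with $X_{B(\ov i)}$ itself (which has measure zero, being the path space of an odometer inside a diagram with growing ``width''), partition $\wh X_{B(\ov i)}$ up to a null set.

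Next I would compute $\wh\mu_{\ov i}(Y_n)$. By tail invariance, the $\wh\mu_{\ov i}$-measure of a cylinder set ending at the vertex $i_{n+1}$ on level $n+1$ is the same as $\mu_{\ov i}$ assigns to the corresponding cylinder in $B(\ov i)$, namely $(f^{(0)}_{i_1 i_0} \cdots f^{(n)}_{i_{n+1} i_n})^{-1}$. The set $Y_n$, restricted to a fixed choice of the first $n$ edges over $i_0, \dots, i_n$ and then branching through any of the $\sigma^{(n)}$ edges into $i_{n+1}$ with source $\neq i_n$, is a union of such cylinders; accounting for the $f^{(0)}_{i_1 i_0} \cdots f^{(n-1)}_{i_n i_{n-1}}$ ways to choose the initial segment over $i_0, \dots, i_n$ (each such choice being a cylinder of $\mu_{\ov i}$-measure $(f^{(0)}_{i_1 i_0} \cdots f^{(n-1)}_{i_n i_{n-1}})^{-1}$, hence total mass $1$ over that portion) and then multiplying by $\sigma^{(n)}$ branching edges each contributing a factor $1/f^{(n)}_{i_{n+1} i_n}$ relative to the continuation, one gets
\[
\wh\mu_{\ov i}(Y_n) = \frac{\sigma^{(n)}}{f^{(n)}_{i_{n+1} i_n}}.
\]
Summing over $n$ and adding $\wh\mu_{\ov i}(X_{B(\ov i)}) = 0$ yields
\[
\wh\mu_{\ov i}(\wh X_{B(\ov i)}) = \sum_{n=0}^{\infty} \frac{\sigma^{(n)}}{f^{(n)}_{i_{n+1} i_n}},
\]
from which the equivalence \eqref{eq4:ext from odom} is immediate.

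The main obstacle is the bookkeeping for the decomposition: I must check carefully that the sets $Y_n$ are pairwise disjoint, that they exhaust $\wh X_{B(\ov i)}$ modulo the null set $X_{B(\ov i)}$, and — most delicately — that $\mathcal{R}$-saturating does not overcount, i.e., each $y \in \wh X_{B(\ov i)} \setminus X_{B(\ov i)}$ has a well-defined \emph{first} level $n$ at which its source sequence leaves $\ov i$, and past that level it must rejoin $\ov i$ forever (which is exactly what tail equivalence to a point of $X_{B(\ov i)}$ forces). A secondary point to justify cleanly is that $\wh\mu_{\ov i}$ is genuinely well-defined and $\sigma$-finite on the cylinder algebra before we sum — this is where the tail-invariance extension from a Borel section to its saturation (as in \cite{AdamskaBKK2017, BezuglyiKarpelKwiatkowski2024, Kechris2024}) is invoked — and that the horizontal stationarity guarantees the relevant entries $f^{(n)}_{i_{n+1} j}$ and the sum $\sigma^{(n)}$ do not depend on the particular position of $i_{n+1}$ in $V_{n+1}$, so the formula is unambiguous.
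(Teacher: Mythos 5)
Your overall strategy---decompose the saturation by the level at which a path branches off the odometer and use tail invariance to assign to every cylinder ending at $i_{n+1}$ the measure $\bigl(f^{(0)}_{i_1 i_0}\cdots f^{(n)}_{i_{n+1}i_n}\bigr)^{-1}$---is in spirit the same as the paper's, which invokes a ready-made extension formula from earlier work encoding exactly this decomposition. However, your execution contains a genuine counting error that produces a false formula for $\wh{\mu}_{\ov i}(\wh X_{B(\ov i)})$. First, $\wh{\mu}_{\ov i}$ restricted to $X_{B(\ov i)}$ \emph{is} $\mu_{\ov i}$, so $\wh{\mu}_{\ov i}(X_{B(\ov i)})=1$, not $0$; this is the ``$1+$'' in the paper's formula. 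Second, and more seriously, your description of $Y_n$ is internally inconsistent: if $y$ agrees with a path of the odometer from level $n+1$ onward but $s(y_n)=j\neq i_n$, then $r(y_{n-1})=j\neq i_n$, so the initial segment $(y_0,\dots,y_{n-1})$ cannot be ``a choice of the first $n$ edges over $i_0,\dots,i_n$''. Tail equivalence constrains only the tail, so below the branch level the path is an \emph{arbitrary} finite path of the ambient diagram $B$ ending at $j$; there are $H^{(n)}_j=r_0\cdots r_{n-1}$ of these (by the ERS property), not $f^{(0)}_{i_1i_0}\cdots f^{(n-1)}_{i_ni_{n-1}}$. The correct count gives
$$
\wh{\mu}_{\ov i}(Y_n)\;=\;\sigma^{(n)}\,\frac{r_0\cdots r_{n-1}}{f^{(0)}_{i_1i_0}\cdots f^{(n)}_{i_{n+1}i_n}},
$$
and summing (with the leading $1$) telescopes to
$$
\wh{\mu}_{\ov i}(\wh X_{B(\ov i)})\;=\;\prod_{l=0}^{\infty}\Bigl(1+\frac{\sigma^{(l)}}{f^{(l)}_{i_{l+1}i_l}}\Bigr),
$$
which is the paper's answer. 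Your identity $\wh{\mu}_{\ov i}(\wh X_{B(\ov i)})=\sum_n \sigma^{(n)}/f^{(n)}_{i_{n+1}i_n}$ is therefore false in general; the ratio of the two quantities is unbounded. You still land on the correct equivalence only because an infinite product $\prod(1+x_n)$ with $x_n\ge 0$ converges exactly when $\sum x_n$ does---but that is a coincidence of the finiteness criterion, not a validation of the computation, and the argument as written would not survive any use of the actual value of the extended measure.
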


\begin{proof} In general settings, if $\ol B$ is a vertex subdiagram of a generalized Bratteli diagram $B$ that is built on the sets $(W_n)$, $W_n \subset V_n$, and $\ol\mu$ is a tail invariant measure on the path space
$X_{\ol B}$, then the extended measure $\wh{\ov \mu}$ can be found by the following formula
$$
\wh{\ov \mu}(\wh X_{\ov B}) = 1 + 
\sum_{n\geq 0} \sum_{v\in W_{n+1}} \sum_{w \in V_n \setminus W_n}
f^{(n)}_{vw} H^{(n)}_w \ol\mu([\ol e^{(n+1)}(v) ]).
$$
The reader can see this and similar results in \cite{AdamskaBKK2017},
\cite{BezuglyiKarpelKwiatkowskiWata}. 
Applying the above formula to the case when $B = B(F_n)$ and $\ol B = B(\ov i)$, we obtain
$$
\wh{\mu}_{\ov i}(\wh X_{B(\ov i)}) = 1 + \sum_{n \geq 0} \sum_{j \in V_n \setminus \{i_n\}} f_{i_{n+1},j}^{(n)}\frac{1}{f^{(0)}_{i_1i_0} \ \cdots \ f^{(n)}_{i_{n+1}i_{n}}}H_j^{(n)}.
$$
Since $B$ has the ERS property, we use \eqref{eq3:product r_i} to find that 
$$ 
H_j^{(n)} = r_0 \ \cdots \ r_{n-1},
$$
where the row sum $r_l$ of the matrix $F_l$ is defined in \eqref{eq:ERS+ECS} and can be computed, in our case, by the formula
$$
r_l = f^{(l)}_{i_{l+1}i_l} + \sigma^{(l)}.
$$
Then
\begin{equation}\label{eq:meas_ext_proof_Thm4.1}
\ba
\wh{\mu}_{\ov i}(\wh X_{B(\ov i)}) = &\ 1 + \sum_{n \geq 0} \frac{r_0 \ \cdots \ r_{n-1}}{f^{(0)}_{i_1i_0} \ \cdots \ f^{(n)}_{i_{n+1}i_{n}}} \sum_{j \in V_n \setminus \{i_n\}} f_{i_{n+1},j}^{(n)}\\
= &\ 1 + \sum_{n \geq 0} \frac{r_0\ \cdots \ r_{n-1}}{f^{(0)}_{i_1i_0} \ \cdots \ f^{(n)}_{i_{n+1}i_{n}}} (r_n - f^{(n)}_{i_{n+1},i_n})\\
= &\ 1 + \sum_{n \geq 0} \left(\prod_{l = 0}^{n} \frac{r_l}{f^{(l)}_{i_{l+1}i_{l}}} - \prod_{l = 0}^{n-1} \frac{r_l}{f^{(l)}_{i_{l+1}i_{l}}}\right).
\ea 
\end{equation}
Denote 
$$
\alpha_n = \prod_{l = 0}^{n} \frac{r_l}{f^{(l)}_{i_{l+1}i_{l}}}.
$$
Therefore, we have
$$
\wh{\mu}_{\ov i}(\wh X_{B(\ov i)}) < \infty \ \ \Longleftrightarrow \ \  \lim_{n \rightarrow \infty} \alpha_n < \infty \ \  \Longleftrightarrow \ \   \prod_{l = 0}^{\infty} \frac{r_l}{f^{(l)}_{i_{l+1}i_{l}}} < \infty.
$$
Note that 
$$
\frac{r_l}{f^{(l)}_{i_{l+1}i_{l}}} = 1 + \frac{\sigma^{(l)}}{f^{(l)}_{i_{l+1}i_{l}}}.
$$
Hence
$$
\wh{\mu}_{\ov i}(\wh X_{B(\ov i)}) < \infty \ \ \Longleftrightarrow \ \  \sum_{i = 0}^{\infty} \frac{\sigma^{(l)}}{f^{(l)}_{i_{l+1}i_{l}}} < \infty.
$$

\end{proof}

We will call two odometers $B(\ov i)$ and $B(\ov i')$ \textit{tail parallel} if there exists $k \in \mathbb{Z}$ such that for all sufficiently large $n$ we have $i_n' = i_n + k$. Denote by $TP(\ov i)$ the family of all odometers $B(\ov i')$ which are tail parallel to $B(\ov i)$. It is evident that two families $TP(\ov i)$ and $TP(\ov i')$ are either identical or disjoint.

\begin{prop}\label{prop:family_odom}
Let $B$ be as above and assume that the measure extension $\wh{\mu}_{\ov i}$ is finite for some odometer $B(\ov i)$. Then the measure extension $\wh{\mu}_{\ov i'}$ is finite if and only if $B(\ov i')$ is tail parallel to $B(\ov i)$.
\end{prop}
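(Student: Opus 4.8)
The plan is to reduce the finiteness of $\wh{\mu}_{\ov i'}(\wh X_{B(\ov i')})$ to the convergence criterion of Theorem~\ref{Thm:ext_from_odom} applied to the odometer $B(\ov i')$, and then to compare the two series term by term using horizontal stationarity. First I would note that, since $B$ is horizontally stationary, for any odometer $B(\ov i')$ the quantities $f^{(n)}_{i'_{n+1}i'_n}$ and $\sigma'^{(n)} = \sum_{j \neq i'_n} f^{(n)}_{i'_{n+1},j}$ depend only on the difference $i'_{n+1} - i'_n$; indeed by Proposition~\ref{Prop:prop_hor_gbd}(\ref{prop2_hor_gbd}) the entire $i'_{n+1}$-row of $F_n$ is a horizontal translate of the $i_{n+1}$-row. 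Hence $f^{(n)}_{i'_{n+1}i'_n} = f^{(n)}_{i'_{n+1}-i'_n}$ in the notation $b^{(n)}_{j-i}$, and similarly $r_n = f^{(n)}_{i'_{n+1}i'_n} + \sigma'^{(n)}$ is just the (level-$n$) row sum, which is the \emph{same} $r_n$ as for $B(\ov i)$ by \eqref{eq:ERS+ECS}. So by Theorem~\ref{Thm:ext_from_odom},
$$
\wh{\mu}_{\ov i'}(\wh X_{B(\ov i')}) < \infty \iff \prod_{n=0}^{\infty} \frac{r_n}{f^{(n)}_{i'_{n+1}i'_n}} < \infty.
$$

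Next I would handle the ``if'' direction. Suppose $B(\ov i')$ is tail parallel to $B(\ov i)$, so there is $k \in \Z$ and $N_0$ with $i'_n = i_n + k$ for all $n \geq N_0$. Then for $n \geq N_0$ we have $i'_{n+1} - i'_n = i_{n+1} - i_n$, so $f^{(n)}_{i'_{n+1}i'_n} = f^{(n)}_{i_{n+1}i_n}$ and the two infinite products agree from index $N_0$ onward; the finitely many initial factors (all strictly positive and finite, since each is a ratio of a finite row sum to a positive entry) do not affect convergence. Hence finiteness of $\wh{\mu}_{\ov i}$ transfers to $\wh{\mu}_{\ov i'}$.

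For the ``only if'' direction I would argue by contraposition: assume $B(\ov i')$ is \emph{not} tail parallel to $B(\ov i)$ and show $\wh{\mu}_{\ov i'}(\wh X_{B(\ov i')}) = \infty$. The key observation is that the band of $F_n$ has bounded width: writing $F_n$ with diagonals $b^{(n)}_{-m(n)},\dots,b^{(n)}_{k(n)}$ as in the displayed matrix preceding Theorem~\ref{Thm:ext_from_odom}, the edge from $i'_{n+1}$ lands at $i'_n$ only if $-m(n) \leq i'_{n+1} - i'_n \leq k(n)$, and among the finitely many diagonal values the \emph{maximal} one, call it $M_n := \max_{-m(n) \le j \le k(n)} b^{(n)}_j$, satisfies $f^{(n)}_{i'_{n+1}i'_n} \le M_n$ for \emph{every} admissible choice. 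The decisive point is that $d_n := i'_{n+1} - i'_n$ differs from $d_n^{(i)} := i_{n+1} - i_n$ for infinitely many $n$ \emph{unless} the partial sums $\sum_{j<n}(d_j - d_j^{(i)}) = i'_n - i'_0 - (i_n - i_0)$ are eventually constant --- which is exactly tail parallelism. So if $B(\ov i')$ is not tail parallel to $B(\ov i)$, then $i'_n - i_n$ is not eventually constant; I would then show the product $\prod r_n / f^{(n)}_{i'_{n+1}i'_n}$ diverges. The cleanest route: since $\sigma'^{(n)} = r_n - f^{(n)}_{i'_{n+1}i'_n} \ge r_n - M_n$, and one can check (using that $b^{(n)}_0 = a_n$ need not be the unique maximum, but the band has at least two nonzero diagonals in the non-degenerate case) that whenever $f^{(n)}_{i'_{n+1}i'_n}$ is \emph{not} the maximal diagonal entry we get $\sigma'^{(n)}/f^{(n)}_{i'_{n+1}i'_n}$ bounded below by a positive constant, forcing $\sum \sigma'^{(n)}/f^{(n)}_{i'_{n+1}i'_n} = \infty$; and if $f^{(n)}_{i'_{n+1}i'_n}$ \emph{is} the maximal entry for all large $n$, one compares with $B(\ov i)$ directly.

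The main obstacle I anticipate is precisely this last case analysis in the ``only if'' direction: it is not a priori obvious that failure of tail parallelism forces the product to diverge, because a non-tail-parallel $\ov i'$ could in principle keep choosing a maximal-diagonal entry at every level while still drifting away from $\ov i$. Resolving this cleanly may require either (a) a more careful argument showing that along $B(\ov i)$ the product $\prod r_n/f^{(n)}_{i_{n+1}i_n}$ being finite forces $f^{(n)}_{i_{n+1}i_n}$ to be the \emph{unique} dominant entry $b^{(n)}_{j_n}$ with $\sum_{n}\sum_{j\ne j_n} b^{(n)}_j / b^{(n)}_{j_n} < \infty$, so that any competing choice $d_n \ne j_n$ picks up a non-summable contribution; or (b) invoking uniqueness of the ergodic measure up to the tail-parallel family, perhaps by noting that two non-tail-parallel finite odometer measures must disagree on some cylinder and hence cannot both be finite extensions of the same structure. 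I would develop option (a), since it is the most self-contained: the finiteness hypothesis on $\wh{\mu}_{\ov i}$ pins down a distinguished diagonal $j_n$ at each level, and then non-tail-parallelism means $d_n \ne j_n$ infinitely often, each such $n$ contributing a term $\ge b^{(n)}_{j_n}/b^{(n)}_{d_n} \ge 1$ — wait, more carefully, contributing $\sigma'^{(n)}/f^{(n)}_{d_n} \ge b^{(n)}_{j_n}/b^{(n)}_{d_n}$, and since $b^{(n)}_{j_n}$ is the max this is $\ge 1$, so the sum over these $n$ diverges, completing the contrapositive.
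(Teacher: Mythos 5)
Your proposal is correct and, once you commit to option (a), it is essentially the paper's own argument: both reduce to the criterion of Theorem \ref{Thm:ext_from_odom}, observe that finiteness of $\wh{\mu}_{\ov i}$ forces $f^{(n)}_{i_{n+1}i_n}$ to strictly dominate $\sigma^{(n)}$ for all large $n$, and then note that at each of the infinitely many levels where $i'_{n+1}-i'_n \neq i_{n+1}-i_n$ (the negation of tail parallelism) one has $f^{(n)}_{i'_{n+1}i'_n} \leq \sigma^{(n)} < f^{(n)}_{i_{n+1}i_n} \leq \sigma'^{(n)}$, so the ratio $\sigma'^{(n)}/f^{(n)}_{i'_{n+1}i'_n}$ is at least $1$ infinitely often and the series diverges. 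The ``obstacle'' you anticipated is resolved exactly by your option (a) --- the dominance of the distinguished diagonal entry over the sum of all others rules out a competing drifting choice --- and option (b) is unnecessary.
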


\begin{proof}
If $B(\ov i')$ is tail parallel to $B(\ov i)$ then the finiteness of  the measure extension $\wh{\mu}_{\ov i'}$ follows from the fact that $B$ is horizontally stationary. 

Assume that $B(\ov i')$ is not tail parallel to $B(\ov i)$. Denote $k_n = i'_n - i_n$. Then there is an increasing sequence $\{n(j)\}_{j=0}^{\infty}$ such that $k_{n(j)} \neq k_{n(j)+1}$. For such $n(j)$ we have $i'_{n(j)} = i_{n(j)} + k_{n(j)} \neq i_{n(j)} + k_{n(j)+1}$. Since $\wh{\mu}_{\ov i}$ is finite, by Theorem~\ref{Thm:ext_from_odom} we have
$$
\sigma^{(n)} = \sum_{j\neq i_n}f^{(n)}_{i_{n+1},j} < f^{(n)}_{i_{n+1}i_n}
$$
for all $n$ greater than some $N$. For $n = n(j)$ large enough, we also have 
$$
f^{(n)}_{i'_{n+1}i'_n} \leq \sum_{j\neq k_{n+1}+i_n}f^{(n)}_{i'_{n+1},j} = \sum_{j\neq i_n}f^{(n)}_{i_{n+1},j} < f^{(n)}_{i_{n+1},i_n} = f^{(n)}_{i'_{n+1},i_n + k_{n+1}} \leq \sum_{j\neq 
 i'_n}f^{(n)}_{i'_{n+1},j} = \sigma'^{(n)}.
$$
Hence for infinitely many $n$, we have
$$
\frac{\sigma'^{(n)}}{f^{(n)}_{i'_{n+1}i'_n}} \geq 1,
$$
and, by Theorem~\ref{Thm:ext_from_odom}, the extension 
$\wh{\mu}_{\ov i'}$ is infinite.
\end{proof}

\begin{remark}
 (1) Let $B(\ov i)$ be an odometer such that the measure extension $\wh{\mu}_{\ov i}$ is finite. Then it follows from Proposition~\ref{prop:family_odom} that for every $k \in \mathbb{Z}$, the measures $\wh{\mu}_{{\ov i} + k}$ are also finite. These measures are pairwise singular and form (after normalization) a countable family of probability ergodic invariant measures on $B$. If $B(\ov i)$ is an odometer such that $i'_n = i_n$ for $n$ large enough then, after normalization, $\wh{\mu}_{\ov i'} = \wh{\mu}_{\ov i}$.

 (2) Let $f^{(n)}_{ij_{(max,n,i)}} = \max\{f^{(n)}_{ij} : j \in V_n\}$ be the maximal element in each row $i$ of $F_n$  for  $n\in \mathbb{N}_0$ and $i \in V_{n+1}$.
 We call an odometer $B(\ov i)$ \textit{dominating} if $i_n = j_{(max,n,i)}$ for all $n$. It is easy to see that such an odometer always exists for a horizontally stationary Bratteli diagram. It follows from Theorem~\ref{Thm:ext_from_odom}, that if an odometer $B(\ov i)$ has finite measure extension $\wh{\mu}_{\ov i}$ then $B(\ov i)$ is dominating. Thus, to check if there exists an odometer $B(\ov i)$ with finite measure extension $\wh{\mu}_{\ov i}$ it is enough to check condition~\eqref{eq4:ext from odom} only for dominating odometers. Moreover, if there exist two dominating odometers $B(\ov i)$ and $B(\ov i')$ with disjoint families $TP(\ov i)$ and $TP(\ov i')$ then there is no odometer in $B$ with finite measure extension. Therefore, if there exists a dominating odometer $B(\ov i)$ satisfying $\sum_{n = 0}^{\infty} \frac{\sigma^{(n)}}{f^{(n)}_{i_{n+1}i_{n}}} < \infty$, then 
the set $TP(\ov i)$ coincides with the set of all odometers
with finite measure extension.
 
\end{remark}

Analyzing the proof of Theorem \ref{Thm:ext_from_odom}, we note that
the proved result can be extended to a wider class of generalized Bratteli diagrams.

\begin{corol}\label{cor:finite_meas_ext}
Let $B = B(F_n)$ be a generalized Bratteli diagram such that
every incidence matrix $F_n$ belongs to the class $ERS(r_n)$. Let $B(\ov i)$, $\mu_{\ov i}$ and $\sigma^{(n)}$ be as in Theorem~\ref{Thm:ext_from_odom}. Then 
\begin{equation*}
\wh{\mu}_{\ov i}(\wh X_{B(\ov i)}) < \infty \ \ \Longleftrightarrow \ \ 
\sum_{n = 0}^{\infty} \frac{\sigma^{(n)}}{f^{(n)}_{i_{n+1}i_{n}}} < \infty. 
\end{equation*}

\end{corol}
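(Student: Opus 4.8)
The plan is to observe that the proof of Theorem~\ref{Thm:ext_from_odom} never used the full strength of horizontal stationarity: the Toeplitz form of the $F_n$ was only cosmetic, and the one place where a structural hypothesis genuinely entered was the invocation of \eqref{eq3:product r_i}, i.e.\ the identity $H^{(n)}_j = r_0 \cdots r_{n-1}$ for all $j \in V_n$. That identity, however, is a consequence of the $ERS$ property alone. So the proof here will be a line-by-line repetition of the proof of Theorem~\ref{Thm:ext_from_odom}, with \eqref{eq3:product r_i} re-derived from $ERS$.

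First I would recall, verbatim from the proof of Theorem~\ref{Thm:ext_from_odom}, the general formula for the extended measure of a vertex subdiagram $\ov B$ built on sets $(W_n)$, $W_n \subset V_n$ (this formula, taken from \cite{AdamskaBKK2017}, \cite{BezuglyiKarpelKwiatkowskiWata}, is valid for arbitrary generalized Bratteli diagrams and makes no use of horizontal stationarity):
\[
\wh{\ov \mu}(\wh X_{\ov B}) = 1 + \sum_{n\geq 0} \sum_{v\in W_{n+1}} \sum_{w \in V_n \setminus W_n} f^{(n)}_{vw}\, H^{(n)}_w\, \ov\mu([\ov e^{(n+1)}(v)]).
\]
Taking $\ov B = B(\ov i)$, so that $W_n = \{i_n\}$ and $\ov\mu([\ov e^{(n+1)}(i_{n+1})]) = (f^{(0)}_{i_1 i_0} \cdots f^{(n)}_{i_{n+1}i_n})^{-1}$, this specializes to exactly the expression displayed in the proof of Theorem~\ref{Thm:ext_from_odom}, namely $\wh{\mu}_{\ov i}(\wh X_{B(\ov i)}) = 1 + \sum_{n\geq 0}\sum_{j \neq i_n} f^{(n)}_{i_{n+1},j}\,H^{(n)}_j\,(f^{(0)}_{i_1 i_0}\cdots f^{(n)}_{i_{n+1}i_n})^{-1}$.

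The only step that must be reworked is the evaluation of $H^{(n)}_j$. Since $H^{(n)} = F_{n-1}\cdots F_0 H^{(0)}$ with $H^{(0)} \equiv 1$, and since a product of $ERS$ matrices is again $ERS$ with row sum equal to the product of the individual row sums --- the one-line check being $\sum_j (FF')_{ij} = \sum_k f_{ik}\sum_j f'_{kj}$ --- we get $H^{(n)}_j = r_0 \cdots r_{n-1}$ for every $j$, recovering \eqref{eq3:product r_i} under the weaker hypothesis. From here everything is identical to \eqref{eq:meas_ext_proof_Thm4.1}: writing $r_l = f^{(l)}_{i_{l+1}i_l} + \sigma^{(l)}$ and setting $\alpha_n := \prod_{l=0}^{n} r_l/f^{(l)}_{i_{l+1}i_l}$, the sum telescopes to $\wh{\mu}_{\ov i}(\wh X_{B(\ov i)}) = 1 + \sum_{n\geq 0}(\alpha_n - \alpha_{n-1})$, which is finite iff $\lim_n \alpha_n < \infty$, iff $\prod_{l\geq 0}\bigl(1 + \sigma^{(l)}/f^{(l)}_{i_{l+1}i_l}\bigr) < \infty$, iff $\sum_{l\geq 0}\sigma^{(l)}/f^{(l)}_{i_{l+1}i_l} < \infty$ (the last step being the standard criterion for convergence of an infinite product of factors $\geq 1$). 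There is no serious obstacle here: the only thing demanding care is the bookkeeping that isolates precisely where horizontal stationarity was used in Theorem~\ref{Thm:ext_from_odom} and confirms it was used only to obtain \eqref{eq3:product r_i}, which requires nothing more than $F_n \in ERS(r_n)$.
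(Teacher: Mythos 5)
Your proposal is correct and is exactly the argument the paper intends: the paper's own justification of the corollary is the single remark that it ``can be proved exactly as Theorem \ref{Thm:ext_from_odom},'' and you have verified the one point that needs checking, namely that the height identity $H^{(n)}_j = r_0\cdots r_{n-1}$ (the only place horizontal stationarity entered) already follows from the $ERS$ property of the $F_n$ via the multiplicativity of row sums under matrix products. The rest of your computation (the extension formula, the telescoping to $\alpha_n$, and the product-versus-series convergence criterion) matches \eqref{eq:meas_ext_proof_Thm4.1} line for line.
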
 

This corollary can be proved exactly as Theorem \ref{Thm:ext_from_odom}. 

\medskip
It is natural to ask the following question. 

\noindent \textbf{\textit{Question.}} 
Let $B(F_n)$, $\sigma^{(n)}$, and $\mu_{\ov i}$ be as in Theorem 
\ref{Thm:ext_from_odom} and 
$$
\sum_{n = 0}^{\infty} \frac{\sigma^{(n)}}{f^{(n)}_{i_{n+1}i_n}} < \infty. 
$$
Does the set of all probability ergodic invariant measures on $B$ coincide (after normalization) with the set $\{\wh{ \mu}_{\ov i}: 
i \in \mathbb{Z}\}$?
\medskip

We answer this question for a class $\mathcal C$ of horizontally stationary Bratteli diagrams.  This class consists of the generalized 
Bratteli diagrams $B = B(F_n)$  whose incidence matrices  $F_n = (f_{ij}^{(n)})$ are three-diagonal and satisfy the two properties:
\be\label{eq4: 3-diag inc matr}
f_{ij}^{(n)} = \begin{cases}
a_n \geq 1, & j = i\\
1, & j= i+1 \mbox{ or } j = i-1\\
0, & \mbox{otherwise},
\end{cases}
\ee
and 
\begin{equation}\label{eq:caseIseriesconv}
\sum_{n = 0}^{\infty} \frac{1}{a_n} < \infty.
\end{equation}

\begin{theorem}\label{thm all erg meas}
If a horizontally stationary Bratteli diagram $B$ belongs to the class $\mathcal C$, then the set of all probability ergodic tail invariant measures on $B$ coincides with the set $\{\wh{ \mu}_{\ol i}
: i \in \Z\}$ (after normalization), where the measure 
$\wh{\mu_{\ol i}}$ is the extension of $\mu_{\ol i}$, 
$\ov i = (i,i,i,\ldots)$, the unique probability tail invariant ergodic measures supported by the $i$-th odometer. 
\end{theorem}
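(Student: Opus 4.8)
The strategy is to classify all ergodic probability tail invariant measures $\mu$ on $X_B$ via the inverse-limit description in Theorem~\ref{BKMS_measures=invlimits}, combined with the Fourier reformulation from Proposition~\ref{prop inv meas}. Since $B \in \mathcal C$, we have $A_n = F_n^T = F_n$ (each $F_n$ is symmetric) with $\wh a^{(n)}(t) = a_n + 2\cos t$, so a finite tail invariant measure corresponds to a sequence of nonnegative vectors $(p^{(n)})$ with $(a_n + 2\cos t)\,\wh p^{(n+1)}(t) = \wh p^{(n)}(t)$, i.e.
\be\label{eq:plan_prod}
\wh p^{(n)}(t) = \left(\prod_{k=n}^{N-1}(a_k + 2\cos t)\right)\wh p^{(N)}(t),
\ee
for all $N > n$. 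The normalization $\mu(X_B) < \infty$ means $\sum_{j\in\Z} p^{(0)}_j H^{(0)}_j = \sum_j p^{(0)}_j < \infty$, so $p^{(0)}\in\ell^1$ and $\wh p^{(0)}$ is a genuine continuous (indeed absolutely convergent) function; positivity of all entries of every $p^{(n)}$ is the key extra constraint.

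First I would show that, because $\sum 1/a_n < \infty$ forces $a_n \to \infty$, the infinite product $\prod_{k\ge 0}(a_k + 2\cos t)$ diverges for every fixed $t$, yet the ratios in \eqref{eq:plan_prod} are controlled. The main point is a \emph{concentration} argument: fix $n$ and look at $p^{(n)} = (a_n\,\mathrm{Id} + T + T^{-1})\,p^{(n+1)}$ entrywise, i.e. $p^{(n)}_j = a_n p^{(n+1)}_j + p^{(n+1)}_{j-1} + p^{(n+1)}_{j+1}$. Iterating and using $a_k$ large, one sees that the mass of $p^{(n)}$ relative to a neighbouring entry is dominated by the product $\prod r_k/a_k$-type quantities; more precisely, I would prove that for any ergodic $\mu$ the ``profile'' $p^{(n)}/\|p^{(n)}\|_{\ell^1}$ must converge, as one reads off deeper and deeper levels, to a point mass — that is, there is a sequence $j_n\in\Z$ with $p^{(n)}_{j_n} / \sum_j p^{(n)}_j \to 1$. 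This is where ergodicity enters: a non-extremal limiting profile would split $\mu$ into a nontrivial convex combination of distinct tail invariant measures (by the standard correspondence between ergodic measures and extreme points of the simplex of tail invariant probability measures, cf.\ \cite{BezuglyiKwiatkowskiMedynetsSolomyak2010}). I would then argue the sequence $(j_n)$ is eventually constant: if $j_{n+1}\ne j_n$ infinitely often, comparing $p^{(n)}_{j_n}=a_np^{(n+1)}_{j_n}+p^{(n+1)}_{j_n-1}+p^{(n+1)}_{j_n+1}$ with the total mass and using $a_n\to\infty$ yields a contradiction with concentration. Say $j_n \equiv i$ for $n\ge N$.

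Next, having localized $\mu$ onto the column $i$ from level $N$ on, I would identify $\mu$ with the extension $\wh\mu_{\ol i}$. Concretely: restrict attention to the Kakutani–Rokhlin tower $X_i^{(N)}$; the concentration statement says the measure of the paths through $i$ at all levels $\ge N$ (which is exactly $X_{B(\ol i)}$ telescoped from level $N$) carries full conditional mass in the limit, and the classification in Theorem~\ref{Thm:ext_from_odom} / Proposition~\ref{prop:family_odom}, valid because \eqref{eq:caseIseriesconv} is precisely hypothesis \eqref{eq4:ext from odom} with $\sigma^{(n)}=2$ and $f^{(n)}_{i_{n+1}i_n}=a_n$, guarantees $\wh\mu_{\ol i}$ is a finite ergodic measure. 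Since two ergodic tail invariant measures that are not mutually singular coincide, and an ergodic $\mu$ localized on column $i$ from level $N$ cannot be singular to $\wh\mu_{\ol i}$ (both give full measure to $\mathcal R$-saturation of $X_{B(\ol i)}$), I conclude $\mu = \wh\mu_{\ol i}$ up to normalization. Conversely, each $\wh\mu_{\ol i}$, $i\in\Z$, is a finite ergodic tail invariant measure by \eqref{eq:caseIseriesconv} and Theorem~\ref{Thm:ext_from_odom}, and distinct $i$ give mutually singular (hence distinct) measures; horizontal stationarity shows they are all obtained from $\wh\mu_{\ol 0}$ by the shift, so the family is exactly $\{\wh\mu_{\ol i}:i\in\Z\}$.

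\textbf{Main obstacle.} The delicate step is the concentration/rigidity argument: showing that \emph{every} ergodic probability tail invariant measure — not merely the obvious ones — must asymptotically concentrate on a single vertical column. The Fourier identity \eqref{eq:plan_prod} gives strong constraints (e.g.\ $\wh p^{(n)}$ is divisible by the polynomial $\prod_{k<n}(a_k+2\cos t)$ whenever $\wh p^{(0)}$ is, which forces the low-frequency behaviour), but translating ``$\wh p^{(0)}\in L^2$ with all the forced polynomial factors divided out stays nonnegative-definite'' into ``$p^{(n)}$ concentrates'' requires care: one must rule out measures spread over a growing but sublinear set of columns, or over a bounded set of columns with oscillating weights. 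I expect the cleanest route is to bound the off-peak mass $\sum_{j\ne j_n} p^{(n)}_j$ from above by a telescoping product involving $\prod(1 + 2/a_k)$ exactly as in the proof of Theorem~\ref{Thm:ext_from_odom} (the convergence \eqref{eq:caseIseriesconv} makes this product finite), showing the total off-peak mass across all levels is finite and hence the peak mass is bounded below — then ergodicity upgrades ``bounded below'' to ``full''. Making that last upgrade rigorous, via the extreme-point characterization of ergodic measures, is the part that needs the most care.
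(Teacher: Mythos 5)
Your overall strategy (classify ergodic measures via the vectors $p^{(n)}$ of Theorem~\ref{BKMS_measures=invlimits} and show they must asymptotically concentrate on a single column) is a reasonable reformulation of what the theorem requires, but the proposal has a genuine gap exactly at the step you yourself flag as delicate: the concentration/rigidity claim is not proved, and the one justification you offer for it is not valid. You assert that for an ergodic $\mu$ the normalized profile $p^{(n)}/\|p^{(n)}\|_{\ell^1}$ must converge to a point mass because ``a non-extremal limiting profile would split $\mu$ into a nontrivial convex combination.'' Extremality in the simplex of tail invariant probability measures does not translate into the finite-level (or even asymptotic) profiles being point masses. Indeed, for the genuine ergodic measure $\wh\mu_{\ol i}$ itself one computes $p^{(n)}_{i\pm 1}/p^{(n)}_i \approx \sum_{k\geq n} a_k^{-1} > 0$, so no finite-level profile is a point mass; and in the paper's own two-column ECS example with $\sum_n a_n^{-1}=\infty$, the unique (hence ergodic) invariant probability measure has profile $(\tfrac12,\tfrac12)$ at every level. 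Concentration is therefore a consequence of the quantitative hypothesis $\sum_n a_n^{-1}<\infty$ and of estimates on the matrix products, not of ergodicity alone; you gesture at the right telescoping bound involving $\prod_k(1+2/a_k)$ but do not carry it out, nor do you justify the subsequent claims that $(j_n)$ is eventually constant and that concentration of the profiles forces $\mu(\mathcal R(X_{B(\ol i)}))>0$ (the latter needs summability of $1-\mu(X^{(n)}_i)$, again a quantitative statement).

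For comparison, the paper proves precisely this hard step by the inverse limit method of \cite{BezuglyiKarpelKwiatkowskiWata}: every ergodic probability measure is realized through limit points $\ov x^{(n)}=\lim_{m\to\infty}\ov g^{(n,m)}_{i_m}$ of rows of the stochastic products $G^{(n,m)}=\tl F_{n+m-1}\cdots\tl F_n$. A combinatorial unimodality claim (Claim~\ref{claim:behavior_g}: for fixed $i$ the entries $g'^{(n,m)}_{ij}$ increase up to $j=i$ and decrease after) combined with stochasticity shows that unbounded sequences $(i_m)$ yield only the zero limit point, reducing to a fixed column $i$; an explicit path-counting formula for $g'^{(n,m)}_{ii}$ together with $\prod_n\frac{a_n+2}{a_n}<\infty$ (equivalent to \eqref{eq:caseIseriesconv}) then identifies the resulting measure with $\wh\mu_{\ol i}$ via de Possel's theorem. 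Your ``only if'' direction (each $\wh\mu_{\ol i}$ is finite and ergodic, via Theorem~\ref{Thm:ext_from_odom} with $\sigma^{(n)}=2$, $f^{(n)}_{i_{n+1}i_n}=a_n$, and mutual singularity for distinct $i$) is fine, but without a complete substitute for the concentration argument the ``every ergodic measure is one of these'' direction remains unproved.
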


\begin{proof}
For such a diagram $B$, we easily find that 
$$
H_i^{(n)} = (a_0 + 2) \ \cdots \ (a_{n-1} + 2), \ \ n \geq 1,
$$
for all $i \in \mathbb{Z}$.
Let 
$$
G'^{(n,m)} = F_{n+m-1}\ \cdots\ F_{n}, 
$$
that is $G'^{(n,m)} = (g'^{(n,m)}_{ij} : i \in V_{n+m}, j \in V_n)$
where $g'^{(n,m)}_{ij}$ indicates the number of paths between 
the vertices $i \in V_{n+m}$ and $j \in V_n$.
It follows from the definition of $F_n$ that 
$$
g'^{(n,m)}_{ij} = 0 \; \mbox{ whenever } \; |i - j| > m.
$$
One can prove the following property of entries $g'^{(n,m)}_{ij}$ using the induction with respect to $m$.

\begin{claim}\label{claim:behavior_g}
For every fixed $i$, the sequence $\{g'^{(n,m)}_{ij}\}$ is increasing if $j = i - m, \ldots, i - 1$ and decreasing if $j = i, i+1, \ldots, i + m - 1$. Moreover,
    $$
    \max\{g'^{(n,m)}_{ij} \; | \; j = i - m, \ldots, i + m\} = g'^{(n,m)}_{ii}.
    $$
\end{claim}

To compute $g'^{(n,m)}_{ii}$ explicitly, we observe the following fact. 
For this Bratteli diagram, to pass from the vertex $i \in V_{n+m}$ to the vertex $i \in V_n$, we can either go through $m$ ``vertical'' edges, or we can choose to go through $k \geq 1$ edges 
 slanted from left to right, and through the same amount, $k$, of edges slanted from right to left to end up in the vertex $i$ on level $V_n$. Thus, we obtain the following formula:
 $$
\begin{aligned}
g'^{(n,m)}_{ii} =&\  a_n\ \cdots\ a_{n + m - 1} + \sum_{2 \leq 2k \leq m} \binom{2k}{k} \left[\sum_{S \subset \{n, \ldots, n + m - 1\}} \ \ \prod_{j \in \{n, \ldots, n + m -1\} \setminus S} a_j\right]\\
=&\ \sum_{0 \leq 2k \leq m} \binom{2k}{k} \left[\sum_{S \subset \{n, \ldots, n + m - 1\}} \ \ \prod_{j \in \{n, \ldots, n + m -1\} \setminus S} a_j\right],
\end{aligned}
$$
where $S$ is a subset of $\{n, \ldots, n + m - 1\}$ 
of cardinality $2k$. 

Further, the entries of the stochastic matrices $G^{(n,m)}$ 
are determined by the formula
\begin{equation}\label{eq:formula_gij}
g_{ij}^{(n,m)} = \frac{g'^{(n,m)}_{ij}}{(a_n + 2)\ \cdots\ 
(a_{n + m - 1}+2)} \; \mbox{ for } \; |i - j| \leq m.
\end{equation}

Note that, by Theorem \ref{Thm:ext_from_odom}, inequality \eqref{eq:caseIseriesconv} implies that every measure $\wh{\mu}_{\ov i}$ is finite. Indeed, for every $\ov i = (i,i,i,\ldots)$ we have $\sigma^{(n)} = 2$ and $f^{(n)}_{i_{n+1}i_n} = a_n$ for $n = 0,1,2 \ldots$
Thus, the family $\{\wh{\mu}_{\ov i} : i \in \mathbb{Z}\}$ is (after normalization) a family of ergodic tail invariant probability measures on $B$ (see \cite{BezuglyiKarpelKwiatkowski2024}). To prove that they form the set of all ergodic invariant probability measures, we use the inverse limit method developed in \cite{BezuglyiKarpelKwiatkowskiWata} and consider the set of probability vectors $\ov g^{(n,m)}_{i} = (g^{(n,m)}_{ij})_{j \in \Z}$ and find all limit points $\ov x^{(n)} = \lim_{m \rightarrow \infty}\ov g^{(n,m)}_{i_m}$ for every fixed number $n$. The inverse limit method allows to identify all ergodic probability tail invariant measures on the path space of a generalized Bratteli diagram with inverse limits of infinite-dimensional simplices associated with levels of the diagram. This method is a generalization of a similar approach 
developed for standard Bratteli diagrams in \cite{BezuglyiKwiatkowskiMedynetsSolomyak2010}, \cite{AdamskaBKK2017}, \cite{BezuglyiKarpelKwiatkowski2019}. The case of generalized Bratteli diagrams is a lot more complicated, since one has to deal with infinite-dimensional simplices instead of the finite-dimensional ones. 
For a classical Bratteli diagram with the sequence of incidence stochastic matrices $(F_n)$, every tail invariant measure $\mu$ is completely 
determined by a sequence of non-negative probability 
vectors $(\ol q^{(n)})$ such
that $F_n^T \ol q^{(n+1)} = \ol q^{(n)}$ for all $n \geq 1$, and the coordinates of vector $\ol q^{(n)}$ correspond to the measure of the towers on level $n$
(see e.g. \cite[Equation (2.2) and Theorem 2.5]{BezuglyiKarpelKwiatkowski2019} and Theorem \ref{thm:mu_as_inv_lim}). 
In other words, the set $M_1(\mc R)$ of all probability tail invariant 
measures on $X_B$ can be identified with the inverse limit
of the sets $(\De_1^{(n)}, F_n^T)$:
$$
M_1(\mc R) = \varprojlim_{n\to \infty} (\De_1^{(n)}, F_n^T),
$$
where $\De_1^{(n)}$ is the finite-dimensional simplex 
indexed by the vertices of the $n$-th level. When working with generalized Bratteli diagrams, one encounters many technical difficulties, one of them is that the infinite-dimensional simplex $\De_1^{(n)}$ is not closed. In \cite[p. 25]{BezuglyiKarpelKwiatkowskiWata}, is presented an algorithm for finding the set of all ergodic probability tail invariant measures for a generalized Bratteli diagram.

\begin{claim}\label{rem:zero_lim_point}
If for some $n$ the set of all limit points $\{\ov x^{(n)}\}$ consists only of a zero vector then there is no invariant probability measure on $B$ \cite{BezuglyiKarpelKwiatkowskiWata}.
\end{claim} 

We first prove that if $\{i_m\}_{m \in \N}$ is unbounded, then for every $n$, every limit point $\ov x^{(n)}$ is a zero vector. It follows from Claim \ref{claim:behavior_g} that for every $\varepsilon > 0$, there is a natural number $p$ such that, for every $m$, the inequality $g^{(n,m)}_{ij} < \varepsilon$ holds whenever 
$|i - j| > p$ (we can pick any natural number $p \geq \frac{1}{2\varepsilon}$). Indeed, assume that the contrary holds. Then there exists $\varepsilon_0 > 0$ such that for every $p$ we can find $m, i, j$
with $|i-j| > p$ and $g^{(n,m)}_{ij} \geq \varepsilon_0$. Then by Claim \ref{claim:behavior_g}, all elements $g^{(n,m)}_{ij}$ for $j \in \{i - p, \ldots, i + p\}$ are greater then $\varepsilon_0$, and we obtain
$$
1 = \sum_{j \in \mathbb{Z}} g^{(n,m)}_{ij} \geq (2p+1)\varepsilon_0. 
$$
Taking $p$ large enough, we get a contradiction. 

Fix $j \in \mathbb{Z}$ and find infinitely many $i_m$ such that $|i_m  - j| > p$. Then 
$$
x_j^{(n)} = \lim_{m \rightarrow \infty} g^{(n,m)}_{i_mj} \leq \varepsilon
$$
for any $\varepsilon > 0$ which implies that $x_j^{(n)} = 0$ and $\ov x = (x_j^{(n)})$ is a zero vector.

Thus, without loss of generality, we can assume that the sequence 
$\{i_m\}_{m \in \N}$ is bounded. 
Passing to subsequences, if needed, we can state that every non-zero limit $\ov x^{(n)}$ of the vectors $\ov g^{(n,m)}_{i_m}$ has the form $\ov x^{(n)} = \lim_{m \rightarrow \infty} \ov g^{(n,m)}_{i}$ for some fixed 
$i \in \mathbb{Z}$. 

Let $\nu_i$ be an ergodic probability tail invariant measure defined by vectors $\ov x^{(n)}$ (we refer to \cite{BezuglyiKarpelKwiatkowskiWata}). Then, for every cylinder set $[\ov e]$ with $r(\ov e) = j \in V_n$, we have
$$
H_j^{(n)}\nu_i([\ov e]) = (a_0 + 2)\ \cdots \ (a_{n-1} + 2)
\nu_i([\ov e]) = \lim_{m \rightarrow \infty} \frac{g'^{(n,m)}_{ij}}{(a_n + 2)\ \cdots \ (a_{n + m - 1} + 2)}.
$$
On the other hand, we can compute $\wh \mu_{\ov i}([\ov e])$ for $\ov i = (i,i,i \ldots)$ using formula (3.13) from \cite{BezuglyiKarpelKwiatkowskiWata}:
$$
\widehat{{\mu}_{\ov i}}([\ol e]) =
\lim_{m \to \infty} \left\lbrack \sum\limits_{i \in  
V_{n + m}}^{}{{g'}_{ij}^{(n,m)}} 
{\mu_{\ov i}( [\ov e^{(n + m)}_i]) }\right\rbrack = \lim_{m \to \infty} \frac{g'^{(n,m)}_{ij}}{a_0\ \cdots \ a_{n + m - 1}}
,\quad 
j = r(\ol e),
$$
where $\ov e^{(n + m)}_i$ is a finite path in $B(\ov i)$ which ends at the vertex $i$ on  level $n + m$.
Condition \eqref{eq:caseIseriesconv} implies that
$$
\alpha = \prod_{n = 0}^{\infty} \frac{a_n + 2}{a_n} < \infty
$$
which, in its turn, implies that $\wh \mu_{\ov i}$ is equivalent to the probability ergodic invariant measure $\nu_i$. 
Indeed, we apply de Possel's theorem (see, for instance, \cite{ShilovGurevich1977}) for the cylinder sets and get that 
$$
\frac{\nu_i([\ov e])}{\widehat{{\mu}_{\ov i}}([\ol e])} = \alpha
$$
(see also (2.17) in \cite{BezuglyiJorgensen2024}).
Thus, the family of measures $\{\wh{ \mu}_{\ov i}: 
i \in \mathbb{Z}\}$ coincides (after normalization) with the set of all ergodic invariant probability measures for $B$.
\end{proof}

\begin{remark}
It follows from \eqref{eq:formula_gij} and Claim \ref{claim:behavior_g} that 
\begin{equation}\label{eq:zero_gi}
\lim_{m \rightarrow \infty} \ov g^{(n,m)}_{i} = 0 \Longleftrightarrow \lim_{m \rightarrow \infty} g^{(n,m)}_{ii} = 0, \mbox{ where } i = i_m.
\end{equation}
Note that since $B$ is horizontally stationary, the value of $g^{(n,m)}_{ii}$ does not depend on $i$.
It is important to have explicit formulas for the computation of 
entries $g_{ii}^{(n,m)}$. We can do it as follows.

Denote by $\Lambda = \{n, \ldots, n+m-1\}$. Then we write 
\begin{equation}\label{eq:formula_gii}
\begin{aligned}
g_{ii}^{(n,m)} = &\ \frac{1}{(a_n + 2)\ \cdots \ (a_{n+m-1}+2)}\left(\sum_{0 \leq 2k \leq m} \binom{2k}{k} \left[\sum_{S \subset \Lambda} \ \ \prod_{j \in \Lambda  \setminus S} a_j \right]\right)\\
= &\ \sum_{0 \leq 2k \leq m} \binom{2k}{k} \left[\sum_{S \subset \Lambda} \prod_{j \in S}\frac{1}{a_j + 2} \prod_{j \in \Lambda \setminus S} \frac{a_j}{a_j + 2}\right]\\
= &\ \left(\prod_{j = n}^{n + m - 1} \frac{a_j}{a_j + 2}\right) \left(\sum_{0 \leq 2k \leq m} \binom{2k}{k} \left[\sum_{S \subset 
\Lambda} \ \ \prod_{j \in S} \frac{1}{a_j}\right] \right),
\end{aligned} 
\end{equation}
where $|S| = 2k$.
\end{remark}

Let $B = B(F_n)$ be a horizontally stationary Bratteli diagram with the three-diagonal incidence matrices $F_n$ (considered in Theorem
\ref{thm all erg meas}) such that the condition \eqref{eq:caseIseriesconv} is not satisfied, i.e.
$$
\sum_{n = 0}^{\infty} \frac{1}{a_n} = \infty.
$$
Now we can prove the following result.

\begin{prop}\label{prop:no_inv_meas} Let $B = B(F_n)$ be a 
a generalized Bratteli diagram with three-diagonal incidence matrices $F_n$ satisfying \eqref{eq4: 3-diag inc matr}. Suppose that 
$\sum_n a_n^{-1} = \infty$. Then 
there is no tail invariant probability measure on $X_B$ 
    if and only if there is $n \in \N_0$ such that, for all $l \in \mathbb{N}$, we have
    \begin{equation}\label{eq:no_measure}
    \lim_{m \rightarrow \infty} \left(\prod_{j = n}^{n + m - 1} \frac{a_j}{a_j + 2}\right) \left(\sum_{\substack{S \subset 
\Lambda, |S| = l}} \prod_{j \in S}\frac{1}{a_j}\right) = 0
    \end{equation}
(the expression in second brackets makes sense only for $m$ large enough such that $m \geq l$).    
 \end{prop}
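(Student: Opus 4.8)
The plan is to reduce the existence of a tail invariant probability measure to the asymptotic behaviour of the single scalar sequence $g^{(n,m)}_{ii}$ — which, by horizontal stationarity together with the symmetry of the tridiagonal matrices $F_n$, depends neither on $i$ nor, for $g^{(n,m)}_{ij}$, on anything but $|i-j|$ — and then to read that behaviour off from \eqref{eq:formula_gii}. Throughout write $P(n,m)=\prod_{j=n}^{n+m-1}\tfrac{a_j}{a_j+2}$ and $E_l(n,m)=\sum_{S\subset\Lambda,\,|S|=l}\prod_{j\in S}a_j^{-1}$, where $\Lambda=\{n,\dots,n+m-1\}$, so that the left side of \eqref{eq:no_measure} is precisely $P(n,m)E_l(n,m)$ and \eqref{eq:formula_gii} becomes $g^{(n,m)}_{ii}=\sum_{0\le 2k\le m}\binom{2k}{k}P(n,m)E_{2k}(n,m)$.

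\emph{Step 1 (reduction to $g^{(n,m)}_{ii}$).} By the inverse limit description of tail invariant measures used in the proof of Theorem~\ref{thm all erg meas} (cf.\ \cite{BezuglyiKarpelKwiatkowskiWata}), every ergodic probability tail invariant measure on $X_B$ comes from a non-zero limit point $\ov x^{(n)}=\lim_m\ov g^{(n,m)}_{i_m}$; such a limit is $0$ when $\{i_m\}$ is unbounded, and when $\{i_m\}$ is bounded we may take $i_m\equiv i$, in which case Claim~\ref{claim:behavior_g} and \eqref{eq:zero_gi} give that $\ov x^{(n)}=0$ exactly when $\lim_m g^{(n,m)}_{ii}=0$. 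Hence the set of limit points at level $n$ is $\{0\}$ iff $\lim_m g^{(n,m)}_{ii}=0$, and by Claim~\ref{rem:zero_lim_point} such an $n$ rules out a probability measure. For the converse I would first note $g^{(n,m+1)}_{ii}\le g^{(n+1,m)}_{ii}$: decomposing $G^{(n,m+1)}=G^{(n+1,m)}\tl F_n$ gives $g^{(n,m+1)}_{ii}=\tfrac{a_n}{a_n+2}g^{(n+1,m)}_{ii}+\tfrac{2}{a_n+2}g^{(n+1,m)}_{i,i+1}$, and $g^{(n+1,m)}_{i,i+1}\le g^{(n+1,m)}_{ii}$ by Claim~\ref{claim:behavior_g}; thus $\limsup_m g^{(n,m)}_{ii}$ is non-decreasing in $n$, so if no level is degenerate then $\limsup_m g^{(0,m)}_{ii}>0$. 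Extracting a subsequence along which $g^{(0,m)}_{ii}$ tends to its positive $\limsup$ and, further, all $\ov g^{(n,m)}_0$ converge (to $\ov x^{(n)}$, say), one obtains — since the columns of $\tl F_n$ are finitely supported — a compatible family with $\tl F_n^{T}\ov x^{(n+1)}=\ov x^{(n)}$, $\sum_w x^{(n)}_w\le 1$ (Fatou), and $x^{(0)}_i>0$; because the heights $H^{(n)}_w=(a_0+2)\cdots(a_{n-1}+2)$ are independent of $w$, the tail invariant measure attached to $(\ov x^{(n)})$ via Theorem~\ref{thm:mu_as_inv_lim} has total mass $\sum_w x^{(0)}_w\in(0,1]$, and normalising it yields a probability measure. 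Therefore $X_B$ has no tail invariant probability measure iff there is $n$ with $\lim_m g^{(n,m)}_{ii}=0$.

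\emph{Step 2 ($g^{(n,m)}_{ii}$ versus \eqref{eq:no_measure}).} For a fixed $n$ I would show $\lim_m g^{(n,m)}_{ii}=0$ iff $\lim_m P(n,m)E_l(n,m)=0$ for all $l$. For ``$\Rightarrow$'': by Claim~\ref{claim:behavior_g} and symmetry, $g^{(n,m)}_{ii}\ge g^{(n,m)}_{i,\,i+l}$, and the number of paths from $i+l\in V_{n+m}$ down to $i\in V_n$ is at least the number of those using exactly $l$ edges slanted by $+1$ and $m-l$ vertical edges, i.e.\ $\sum_{|S|=l}\prod_{j\in\Lambda\setminus S}a_j=\bigl(\prod_{j\in\Lambda}a_j\bigr)E_l(n,m)$; dividing by the height factor $\prod_{j\in\Lambda}(a_j+2)$ gives $g^{(n,m)}_{i,i+l}\ge P(n,m)E_l(n,m)$, whence $P(n,m)E_l(n,m)\to 0$. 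The direction ``$\Leftarrow$'' is the delicate one, since $g^{(n,m)}_{ii}$ is a \emph{growing} sum of the $P(n,m)E_{2k}(n,m)$ and termwise decay does not by itself control it; here I would use the normalisation
$$
\sum_{l\ge 0}2^{l}P(n,m)E_l(n,m)=P(n,m)\prod_{j=n}^{n+m-1}\Bigl(1+\tfrac{2}{a_j}\Bigr)=1 .
$$
Setting $v^{(m)}_l:=2^{l}P(n,m)E_l(n,m)\ge 0$, so $\sum_l v^{(m)}_l=1$ and $g^{(n,m)}_{ii}=\sum_k\binom{2k}{k}4^{-k}v^{(m)}_{2k}$, and using $0\le\binom{2k}{k}4^{-k}\le 1$ with $\binom{2k}{k}4^{-k}\to 0$: given $\varepsilon>0$ pick $K$ with $\binom{2k}{k}4^{-k}<\varepsilon$ for $k>K$; then $g^{(n,m)}_{ii}\le\sum_{k\le K}v^{(m)}_{2k}+\varepsilon$, and $\sum_{k\le K}v^{(m)}_{2k}=\sum_{k\le K}2^{2k}P(n,m)E_{2k}(n,m)\to 0$ by hypothesis, so $\limsup_m g^{(n,m)}_{ii}\le\varepsilon$; let $\varepsilon\downarrow 0$.

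Combining Steps 1 and 2 gives the proposition: $X_B$ has no tail invariant probability measure $\iff$ $\lim_m g^{(n,m)}_{ii}=0$ for some $n$ $\iff$ there is $n$ with $\lim_m P(n,m)E_l(n,m)=0$ for all $l$, which is \eqref{eq:no_measure}. I expect the main obstacle to be the ``$\Leftarrow$'' half of Step 2 — deducing $g^{(n,m)}_{ii}\to 0$ from the smallness of the individual terms of an unbounded sum — which is exactly what forces the use of the identity $\sum_l v^{(m)}_l=1$; a secondary technical point is making the converse of Claim~\ref{rem:zero_lim_point} precise, which I would do via the monotonicity $g^{(n,m+1)}_{ii}\le g^{(n+1,m)}_{ii}$ and the diagonal extraction described in Step 1.
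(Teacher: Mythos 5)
Your proof is correct, and its core step --- reducing everything to the scalar sequence $g^{(n,m)}_{ii}$ and then taming the unbounded sum in \eqref{eq:formula_gii} by means of the normalization $\sum_{l\ge 0}2^{l}P(n,m)E_{l}(n,m)=1$ (the paper's identity \eqref{eq:formula_a0}, in your notation) --- is the same as the paper's. There are three genuine differences worth recording. First, the paper takes your Step~1 largely for granted: it invokes Claim~\ref{rem:zero_lim_point} for one direction and leaves the converse (that a non-degenerate family of limit points produces a probability measure) to the inverse-limit machinery of the cited reference, whereas your monotonicity $g^{(n,m+1)}_{ii}\le g^{(n+1,m)}_{ii}$ together with diagonal extraction and Fatou makes this self-contained; the passage from $\tl F_n^{T}\ov x^{(n+1)}=\ov x^{(n)}$ to the vectors $p^{(n)}_w=x^{(n)}_w/H^{(n)}_w$ required by Theorem~\ref{thm:mu_as_inv_lim} is exactly right. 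Second, for the implication ``$\lim_m g^{(n,m)}_{ii}=0$ implies \eqref{eq:no_measure}'' the paper declares the even-$l$ case clear from \eqref{eq:formula_gii} and relegates odd $l$ to a separate remark; your path-counting lower bound $g^{(n,m)}_{ii}\ge g^{(n,m)}_{i,i+l}\ge P(n,m)E_l(n,m)$ handles all $l$ uniformly and is cleaner. Third --- and worth flagging --- in the reverse implication the paper writes $\binom{2k}{k}=\frac{1}{2^{k}}\binom{2k}{k}\cdot 2^{k}$ and chooses $k_\varepsilon$ so that $\frac{1}{2^{k}}\binom{2k}{k}<\varepsilon$ for $k>k_\varepsilon$; since $\frac{1}{2^{k}}\binom{2k}{k}\sim 2^{k}/\sqrt{\pi k}\to\infty$, no such $k_\varepsilon$ exists, so the printed argument contains a slip (the weight $2^{k}$ attached to $|S|=2k$ in the last line of \eqref{eq:formula_a} should be $2^{2k}=4^{k}$, as \eqref{eq:formula_a0} dictates). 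Your version, which plays $\binom{2k}{k}4^{-k}\le 1$ and $\binom{2k}{k}4^{-k}\to 0$ against $v^{(m)}_{2k}=4^{k}P(n,m)E_{2k}(n,m)$ with $\sum_l v^{(m)}_l=1$, is the correct form of that estimate.
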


 \begin{remark}
     Note that in equation \eqref{eq:no_measure}, the set $S$ can have arbitrary size, even or odd. This property follows from the equation \eqref{eq:formula_a0} and estimates \eqref{eq:formula_a}. 
 \end{remark}

\begin{proof}
By \eqref{eq:zero_gi} and Claim \ref{rem:zero_lim_point}, it is enough to prove that $\lim_{m \rightarrow \infty} g^{(n,m)}_{ii} = 0$ if and only if \eqref{eq:no_measure} holds for all $l \in \mathbb{N}$. By \eqref{eq:formula_gii} and since $\sum_n a_n^{-1} = \infty$, it is clear that $\lim_{m \rightarrow \infty} g^{(n,m)}_{ii} = 0$ implies that \eqref{eq:no_measure} holds for all $l \in \mathbb{N}$.

Conversely,  prove that \eqref{eq:no_measure}  implies that $\lim_{m \rightarrow \infty} g^{(n,m)}_{ii} = 0$ for all $l$.
Take any $\varepsilon > 0$ and pick a natural number $k_{\varepsilon}$ such that
    $$
    \frac{1}{2^{k}}\binom{2k}{k} < \varepsilon
    $$
    for every $k > k_{\varepsilon}$.
    
It follows from 
\eqref{eq:formula_gii} that for all $m \geq k_{\varepsilon}$ 
    $$
    \ba
    g_{ii}^{(n,m)} = &\
    \left(\prod_{j = n}^{n + m - 1} \frac{a_j}{a_j + 2}\right) \left( 1 + \sum_{2 \leq 2k \leq m} \binom{2k}{k} \left[\sum_{\substack{S \subset \Lambda, \ |S| = 2k}}\ \ \prod_{j \in S} \frac{1}{a_j}\right] \right)\\
    = &\ \left(\prod_{j = n}^{n + m - 1} \frac{a_j}{a_j + 2}\right) \left( 1 + \sum_{2 \leq 2k \leq 2k_{\varepsilon}} \binom{2k}{k} \left[\sum_{\substack{S \subset \Lambda,\ |S| = 2k}}\ \  \prod_{j \in S} \frac{1}{a_j}\right]\right.\\
 & \ \ \   + \ \left. \sum_{2k_{\varepsilon} < 2k \leq m}  \frac{1}{2^{k}}\binom{2k}{k} 2^k \left[\sum_{\substack{S \subset \Lambda, \ |S| = 2k}}\ \  \prod_{j \in S} \frac{1}{a_j}\right]\right)\\
    \leq &\ \left(\prod_{j = n}^{n + m - 1} \frac{a_j}{a_j + 2}\right) \left( 1 + \binom{2k_{\varepsilon}}{k_{\varepsilon}}\sum_{2 \leq 2k \leq 2k_{\varepsilon}}  \left[\sum_{\substack{S \subset \Lambda,\ |S| = 2k}}\ \  \prod_{j \in S} \frac{1}{a_j}\right]\right.\\
 & \ \ \  + \ \left. \varepsilon \sum_{2k_{\varepsilon} < 2k \leq m} 2^k \left[\sum_{\substack{S \subset \Lambda, \ |S| = 2k}}\ \  \prod_{j \in S} \frac{1}{a_j}\right]\right).\\
  \ea
    $$

Note that
\begin{equation}\label{eq:formula_a0}
\ba
(a_n + 2)\ \cdots\ (a_{n+m-1}+2) = &\  a_n\ \cdots\ a_{n + m - 1} + \sum_{1 \leq k \leq m} 2^k \left(\sum_{\substack{S \subset \Lambda,  
 |S| = k}} \left[\prod_{j \in \Lambda  \setminus S}a_j\right]\right)\\
= &\ a_n\ \cdots \ a_{n + m - 1} \left[ 1 + \sum_{1 \leq k \leq m} 2^k \left(\sum_{\substack{S \subset \Lambda, |S| = k}} \left[\prod_{j \in S}\frac{1}{a_j}\right]\right)\right].
\ea
\end{equation}    
From \eqref{eq:formula_a0}, we obtain
\begin{equation}\label{eq:formula_a}
\ba
1 = &\  \left(\prod_{j = n}^{n + m - 1}\frac{a_j}{a_j + 2}\right) \left[ 1 + \sum_{1 \leq k \leq m} 2^k \left(\sum_{\substack{S \subset \Lambda, |S| = k}} \left[\prod_{j \in S}\frac{1}{a_j}\right]\right)\right]\\
\geq &\ \left(\prod_{j = n}^{n + m - 1}\frac{a_j}{a_j + 2}\right) \sum_{1 \leq k \leq m} 2^k \left(\sum_{\substack{S \subset \Lambda, |S| = k}} \left[\prod_{j \in S}\frac{1}{a_j}\right]\right)\\
\geq &\ \left(\prod_{j = n}^{n + m - 1}\frac{a_j}{a_j + 2}\right)\sum_{2k_{\varepsilon} \leq 2k \leq m} 2^k \left(\sum_{\substack{S \subset \Lambda, |S| = 2k}} \left[\prod_{j \in S}\frac{1}{a_j}\right]\right).
\ea
\end{equation}

Thus, we get
$$
  g_{ii}^{(n,m)} \leq \left(\prod_{j = n}^{n + m - 1} \frac{a_j}{a_j + 2}\right) \left( 1 + \binom{2k_{\varepsilon}}{k_{\varepsilon}}\sum_{2 \leq 2k \leq 2k_{\varepsilon}}  \left[\sum_{\substack{S \subset \Lambda, \ |S| = 2k}} \ \ 
    \prod_{j \in S} \frac{1}{a_j}\right] \right) +  \varepsilon.
 $$   
Since $\sum_n a_n^{-1} = \infty$ and by \eqref{eq:no_measure}, we can find $m_{\varepsilon}$ such that for all $m > m_{\varepsilon}$ we have
    $$
    \prod_{j = n}^{n + m - 1} \frac{a_j}{a_j + 2} < \varepsilon
    $$
    and
    $$
    \prod_{j = n}^{n + m - 1} \frac{a_j}{a_j + 2} \cdot \binom{2k_{\varepsilon}} {k_{\varepsilon}} \cdot \left(\sum_{2 \leq 2k \leq 2k_{\varepsilon}} \ \ \sum_{\substack{S \subset \Lambda, \ |S| = 2k}}\ \ \prod_{j \in S} \frac{1}{a_j}\right) < \varepsilon.
    $$
Hence, we have proved that $g_{ii}^{(n,m)} < 3\varepsilon$ whenever $m > m_{\varepsilon}$, i.e.,
    $$
    \lim_{m \rightarrow \infty} g^{(n,m)}_{ii} = 0.
    $$
    It follows that the unique limit point of any sequence of vectors $\ov g^{(n,m)}_i$ is zero. This means there is no invariant probability tail invariant measure on $B$.
\end{proof}

\begin{example} Suppose that a Bratteli diagram $B$ from the class $\mathcal C$ is defined by the incidence matrices $F_n$ where 
 $a_n = a$ for all $n \in \mathbb{N}_0$,  $a \in \mathbb{N}$. Then
    $$
    \lim_{m \rightarrow \infty} \prod_{j = n}^{n + m - 1} \frac{a}{a+2}= \lim_{m \rightarrow \infty} \left(\frac{a}{a+2}\right)^m = 0
    $$
    for every $n = 0,1,\ldots$ Moreover, we have
    $$
    \sum_{\substack{S \subset \Lambda,\ |S| = k}}\ \  \prod_{j \in S}\frac{1}{a_j} = \frac{1}{a^k}\binom{m}{k}
    $$
    and
    $$
     \lim_{m \rightarrow \infty} \left(\prod_{j = n}^{n + m - 1} \frac{a_j}{a_j + 2}\right) \left(\sum_{\substack{S \subset \Lambda, \ |S| = k}} \ \ \prod_{j \in S}\frac{1}{a_j}\right) = \lim_{m \rightarrow \infty} \left(\frac{a}{a+2}\right)^m \frac{1}{a^k}\binom{m}{k} = 0.
    $$
    By Proposition \ref{prop:no_inv_meas}, there is no invariant tail invariant probability measure on $B$.
\end{example}

\begin{example} Assume now that the main diagonal entries $F_n$ are 
     $a_n = n + 1$ for all $n \in \mathbb{N}_0$. Then 
    $$
    \prod_{j = n}^{n + m - 1} \frac{a_j}{a_j + 2} = \frac{n(n+1)}{(n+m)(n+m+1)}.
    $$
    We also have
    $$
    \sum_{\substack{S \subset \Lambda\\ |S| = k}} \prod_{j \in S}\frac{1}{a_j} \leq \left( \frac{1}{n} + \ldots + \frac{1}{n+m}\right)^k \leq \ln^k(n+m).
    $$
    Thus, we get 
    $$
    \lim_{m \rightarrow \infty} \left(\prod_{j = n}^{n + m - 1} \frac{a_j}{a_j + 2}\right) \left(\sum_{\substack{S \subset \Lambda, \  |S| = k}} \ \ \prod_{j \in S}\frac{1}{a_j}\right) \leq \lim_{m \rightarrow \infty}\frac{n(n+1)}{(n+m)(n+m+1)}\ln^k(n+m) = 0.
    $$
    Therefore, there is no tail invariant probability measure on $B$.
\end{example}

\subsection{Measure extension from ECS subdiagrams}

Let $B$ be a horizontally stationary generalized Bratteli diagram and let $\ov B$ be a vertex subdiagram of $B$ defined by the sequence of vertices 
$(W_n)$ where 
$W_n \subset V_n$ and  $|W_n| < \infty$. Assume that the incidence 
matrices $\ol F_n$ of $\ol B$  have the ECS property. Then $\ov B$ admits a probability tail invariant measure $\ov\mu$ such that, for every $v \in W_{n+1}$, one has
$$
\ov \mu([\ov e^{(n+1)}(v)]) = \frac{1}{c_0 \ \cdots \ c_n},
$$
where $c_n = \sum_{i \in W_{n+1}} \ol f_{ij}^{(n)}$ for every $j \in W_{n}$. Then the following generalization of Theorem~\ref{Thm:ext_from_odom} holds:

\begin{theorem}\label{Thm:ext_from_ECS}
Let $B(F_n)$, $\ov B$ and $\ol\mu$ be as above.
Then 
$$
\wh{\ov \mu}(\wh X_{\ov B}) < \infty \ \ \Longleftrightarrow \ \ 
\prod_{i = 0}^{\infty} \frac{r_i}{c_i} < \infty \ 
\mbox{and}\ \sup \{|W_n| : n \in \N_0\} < \infty.
$$
\end{theorem}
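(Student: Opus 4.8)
The plan is to reduce the statement to a telescoping computation, exactly as in the proof of Theorem~\ref{Thm:ext_from_odom}. Starting from the general measure-extension formula used there,
\[
\wh{\ov\mu}(\wh X_{\ov B}) = \ov\mu(X_{\ov B}) + \sum_{n\ge 0}\ \sum_{v\in W_{n+1}}\ \sum_{w\in V_n\setminus W_n} f^{(n)}_{vw}\, H^{(n)}_w\, \ov\mu([\ov e^{(n+1)}(v)]),
\]
I would substitute the two ingredients available for a horizontally stationary $B$: first, $H^{(n)}_w = r_0\cdots r_{n-1}$ by \eqref{eq3:product r_i} (valid because such a $B$ lies in $ERS(r_n)$, and $w\in V_n$); second, $\ov\mu([\ov e^{(n+1)}(v)]) = (c_0\cdots c_n)^{-1}$ for every $v\in W_{n+1}$. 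Note that $\ov\mu(X_{\ov B})$ is a finite constant (it equals $|W_0|$), so it will play no role in the finiteness question.

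Next I would evaluate the inner double sum. For a fixed $v\in W_{n+1}$, the $ERS(r_n)$ property of $F_n$ gives $\sum_{w\in V_n} f^{(n)}_{vw} = r_n$, while $\sum_{w\in W_n} f^{(n)}_{vw} = \sum_{w\in W_n} \ol f^{(n)}_{vw}$ since $\ov B$ is a vertex subdiagram. Summing the difference over $v\in W_{n+1}$ and using the $ECS(c_n)$ property of $\ol F_n$ yields
\[
\sum_{v\in W_{n+1}}\ \sum_{w\in V_n\setminus W_n} f^{(n)}_{vw} = |W_{n+1}|\, r_n - |W_n|\, c_n .
\]
Plugging this in, the $n$-th summand of the series becomes $\gamma_{n+1}-\gamma_n$, where $\gamma_k := |W_k|\prod_{l=0}^{k-1}\tfrac{r_l}{c_l}$ (so $\gamma_0=|W_0|$). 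Since every such summand is a sum of non-negative terms, $(\gamma_k)$ is non-decreasing and the series telescopes; hence
\[
\wh{\ov\mu}(\wh X_{\ov B}) < \infty \iff \lim_{k\to\infty}\gamma_k < \infty .
\]

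The decoupling of the two conditions now rests on the inequality $c_i\le r_i$, which I would verify as follows: for any fixed $w\in W_i$,
\[
c_i=\sum_{v\in W_{i+1}}\ol f^{(i)}_{vw}=\sum_{v\in W_{i+1}} f^{(i)}_{vw}\le \sum_{v\in V_{i+1}} f^{(i)}_{vw}=r_i,
\]
the last equality holding because a horizontally stationary $F_i$ also has the $ECS(r_i)$ property, by \eqref{eq:ERS+ECS}. Thus $P_n:=\prod_{l=0}^{n-1}\tfrac{r_l}{c_l}\ge 1$ is non-decreasing with limit $P\in[1,\infty]$, and $\gamma_n=|W_n|\,P_n$ with $|W_n|\ge 1$. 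If $P<\infty$ and $\sup_n|W_n|=M<\infty$, then $\gamma_n\le M P<\infty$. Conversely, if $\lim_n\gamma_n=L<\infty$, then $P_n\le |W_n|P_n=\gamma_n\le L$ forces $P<\infty$, and $|W_n|\le|W_n|P_n=\gamma_n\le L$ forces $\sup_n|W_n|\le L<\infty$. Together with the previous paragraph this gives the stated equivalence.

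I do not expect a deep obstacle here; the work is in the bookkeeping. The point to be careful about is to keep $\ol F_n$ (the subdiagram matrix, which is $ECS$ but in general not $ERS$, so its row sums vary with $v$) cleanly separated from $F_n$, and to notice that the inequality $c_i\le r_i$ — itself a consequence of horizontal stationarity of $B$ — is exactly what makes $\gamma_n=|W_n|\,P_n$ a product of two non-decreasing, $\ge 1$-valued factors, so that finiteness of $\lim_n\gamma_n$ is equivalent to finiteness of each factor separately.
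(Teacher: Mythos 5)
Your argument is correct and follows essentially the same route as the paper's proof: the same measure-extension formula, the same substitutions $H^{(n)}_w=r_0\cdots r_{n-1}$ and $\ov\mu([\ov e^{(n+1)}(v)])=(c_0\cdots c_n)^{-1}$, the same evaluation of the double sum as $|W_{n+1}|r_n-|W_n|c_n$, the same telescoping, and the same use of $r_i\ge c_i$ to decouple the two conditions. Your explicit verification of $c_i\le r_i$ via the $ECS(r_i)$ property of $F_i$ only spells out a step the paper takes for granted.
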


\begin{proof}
We first compute the measures extension $\wh{\ov \mu}(\wh X_{\ov B})$ 
using the method from \cite{AdamskaBKK2017},
\cite{BezuglyiKarpelKwiatkowskiWata}:
$$
\ba
\wh{\ov \mu}(\wh X_{\ov B}) = &\ 1 + \sum_{n \geq 0} \sum_{i \in W_{n+1}} \sum_{j \in W_n'} f_{ij}^{(n)} \ov p_i^{(n+1)} H_j^{(n)}\\
 = &\ 1 + \sum_{n \geq 0} \frac{r_0 \ \cdots \ r_{n-1}}{c_0 \ \cdots \ c_{n}}\sum_{i \in W_{n+1}} \sum_{j \in W_n'}  f_{ij}^{(n)},\\
\ea 
$$
where $W'_n = V_n \setminus W_n$ and $\ov p_i^{(n+1)} = \ov \mu([\ov e^{(n+1)}(i)])$ is the measure $\ov \mu$ of a cylinder set which ends at the vertex $i \in V_{n+1}$.
We have
$$
r_n = \sum_{j \in V_{n}}f^{(n)}_{ij} = \sum_{j \in W_{n}}f^{(n)}_{ij} + \sum_{j \in W'_{n}}f^{(n)}_{ij}.
$$
Denote 
$$
\ov r_i^{(n)} = \sum_{j \in W_{n}}f^{(n)}_{ij}.
$$
Then
$$
\ba
\wh{\ov \mu}(\wh X_{\ov B}) = &\ 1 + \sum_{n \geq 0} \frac{r_0 \ \cdots \ r_{n-1}}{c_0 \ \cdots \ c_{n}}\sum_{i \in W_{n+1}} \left(r_n - \ov r_i^{(n)}\right)\\
= &\ 1 + \sum_{n \geq 0} \frac{r_0 \ \cdots \ r_{n-1}}{c_0 \ \cdots \ c_{n}}\left(r_n|W_{n+1}| - c_n |W_n|\right)\\
= &\ 1 + \sum_{n \geq 0} \left(\prod_{i = 0}^{n} \frac{r_i}{c_i} |W_{n+1}| - \prod_{i = 0}^{n-1} \frac{r_i}{c_i}|W_n|\right).
\ea 
$$
Let 
$$
\alpha_n = \prod_{i = 0}^{n} \frac{r_i}{c_i}|W_{n+1}|.
$$
Then we use the fact that $r_i \geq c_i$ for all $i$ to deduce the following implications:
$$
\wh{\ov \mu}(\wh X_{\ov B}) < \infty \ \ \Longleftrightarrow \ \  \lim_{n \rightarrow \infty} \alpha_n < \infty \ \  \Longleftrightarrow \ \   \prod_{i = 0}^{\infty} \frac{r_i}{c_i} < 
\infty \ \mbox{and} \ \sup_n\{ |W_{n}|\}  < \infty.
$$

\end{proof}

The following example illustrates Theorem \ref{Thm:ext_from_ECS}.

\begin{example}
Let  $B=(V, E)$ be the horizontally stationary Bratteli diagram defined by the sequence of matrices $(F_n)$ such that 
$f_{ij}^{(n)}= 1$ if $j=i-1$  or  $j=i+1$,   
$f_{ii}^{(n)}= a_n$,   and  $f_{ij}^{(n)}=0$ when $|j -i| > 1$ ,
$i \in \Z$. Take the subdiagram $\ol B$ such   
$W_n   = \{0,1\}$  for all $n$. Then the incidence matrices $F'_n$
of the vertex subdiagram   $\ol B=(\ol V, \ol E)$ have the form 
$$
F'_n = \begin{pmatrix}
    a_n & 1\\
    1 & a_n
\end{pmatrix}.
$$
Thus,  we have  $r_n = a_n +2$  and  $c_n = a_n +1$, $n \in \N_0$.
Let  $\ol \mu$  be a tail invariant measure on the path space 
$X_{\ol B}$ of $\ol B$ defined as in Theorem \ref{Thm:ext_from_ECS}. 
It follows from this theorem that the extended measure $\wh{\ol \mu}$
is finite if and only if 
$$
\prod_{n = 0}^\infty \frac{r_n}{c_n} < \infty 
\ \ \Longleftrightarrow \ \   \sum_{n=0}^\infty \frac{1}{a_n} < \infty.
$$

The subdiagram  $\ol B$ considered above contains two vertical 
odometers, $\ol B_0$ and $\ol B_1$, determined by the vertices 
$\{0\}$ and $\{1\}$, respectively. Let  $\mu_0$ and $\mu_1$ 
be the unique tail invariant probability ergodic measures on 
the path spaces of $\ol B_0$ and $\ol B_1$, respectively. The condition $\sum_{n=0}^\infty a_n^{-1} < \infty$
implies that the extended measures $\wh \mu_0$ and $\wh \mu_1$ are
finite on $X_{\ol B}$. Since they are mutually singular, 
$\wh \mu_0$ and $\wh \mu_1$ form the set of all ergodic finite probability tail invariant measures on $X_{\ov B}$ (see also \cite{AdamskaBKK2017}). Furthermore, the measure $\ol\mu$
has the following property: $\ol\mu(X_{\ol B_i}) > 0$, $ i= 0,1$, which easily follows from the convergence of the series $\sum_{n=0}^\infty a_n^{-1}$ because 
\begin{equation}\label{eq:mu_hat_ex4.12}
\wh{\ol \mu}(X_{\ol B_0}) = \wh{\ol \mu}(X_{\ol B_1}) =
\prod_{i=0}^\infty \frac{a_i}{a_i +1}. 
\end{equation}
This means that the measure $\wh{\ol \mu}$ defined above in this example is a convex combination of the ergodic measures  $\wh \mu_0$ and $\wh \mu_1$. Moreover, from \eqref{eq:mu_hat_ex4.12} it follows that $\wh{\ov \mu}$ is not an ergodic measure.

In the case when $\sum_{n=0}^\infty a_n^{-1} = \infty$, the extended measures $\nu_0$ and $\nu_1$ on $X_{\ov B}$ are infinite, the  measure $\ov \mu$ is a unique invariant probability measure on $\ov B$. Clearly, the extensions $\wh \mu_0$ and $\wh \mu_1$ of the measures $\mu_0$, $\mu_1$ to the whole path space $X_B$ are also infinite. It can be also seen from the formula \eqref{eq:meas_ext_proof_Thm4.1}. The fact that $\ov \mu$ is the unique invariant probability measure on $\ov B$ follows from Proposition 3.1 in \cite{AdamskaBKK2017} and can be also proved using the inverse limit method.

Clearly, the same approach can be used for vertex subdiagrams $\ol B$ supported by any finite number of vertices, $W_n = \{0, 1, ..., k\}$,
$n \in \N_0$. The $(k+1)\times(k+1)$ incidence matrices $F_n'$ of $\ov B$ have the form
$$
F'_n = \begin{pmatrix}
    a_n & 1 & 0  & \ldots & 0 & 0 & 0\\
    1 & a_n & 1 & \ldots & 0 & 0 & 0\\
    0 & 1 & a_n  & \ldots & 0 & 0 & 0\\ 
  \vdots & \vdots & \ddots & \ddots & \ddots &  \vdots & \vdots \\
    0 & 0 & 0 & \ldots & a_n & 1 & 0\\     
    0 & 0 & 0 & \ldots & 1 & a_n & 1\\    
    0 & 0 & 0 & \ldots & 0 & 1 & a_n\\
\end{pmatrix}.
$$

Let us observe that the diagram $\ov B$ does not satisfy the ECS property if $k > 1$. The subdiagram $\ov B$ contains $k + 1$ ``vertical'' odometers $\ov B_i$ with invariant probability measures $\mu_i$ for $i = 0, 1, \ldots, k$. If $\sum a_n^{-1} < \infty$ then by the same arguments as above we can prove that the measures $\mu_i$ have finite extensions $\nu_i$ on $\ov B$ and $\wh \mu_i$ on $B$ for $i = 0, 1, \ldots, k$. Moreover, the measures $\{\nu_i\}_{i = 0}^k$ are (after normalization) all ergodic probability invariant measures on $\ov B$.
\end{example}

\subsection{Horizontally invariant measures} 
In this subsection, we study Markov measures on the path space $X_B$
of a generalized Bratteli diagrams. Such measures were considered 
in \cite{DooleyHamachi2003}, \cite{Renault2018}, 
\cite{BezuglyiJorgensen2022}, and some other papers. 
The fact that a generalized Bratteli diagram is horizontally stationary 
allows us to work with Markov measures satisfying 
 the invariance property with respect to horizontal translations. 

We begin with the definition of a Markov measure. 

\begin{definition}\label{def Mark meas} 
Let $B = (V, E)$ be  a generalized Bratteli diagram
constructed by 
a sequence of incidence matrices $(F_n)$.  Let $q = (q_{v})$ be a 
strictly positive  vector, $q_v >0, v\in V_0$, and let $(P_n)$ 
be a sequence of non-negative infinite matrices with entries 
$(p^{(n)}_{v,e})$ where $v \in V_n, e \in E_{n}, n= 0, 1, 2, 
\ldots $. To define a
\textit{Markov measure} $m$, we require that the sequence $(P_n)$
 satisfies the  following properties:
\begin{equation}\label{defn of P_n}
(a)\ \ p^{(n)}_{v,e} > 0 \ \Longleftrightarrow \ (s(e) = v); \ \ \ \
(b)\ \  \sum_{e : s(e) = v} p^{(n)}_{v,e} =1.
\end{equation}
Condition \eqref{defn of P_n}(a) shows that $p^{(n)}_{v,e}$ is 
positive 
only on the edges outgoing from the vertex $v$, and therefore the
 matrices $P_n$ and $A_n =F_n^T$ share the same set of zero entries.
 For any cylinder set $[\overline e] = [(e_0, e_1, \ldots , e_n)]$
generated by the path $\ol e$ with $v =s(e_0) \in V_0$, we set
\begin{equation}\label{eq m([e])}
m([\overline e]) = q_{s(e_0)}p^{(0)}_{s(e_0), e_0}\ \cdots \ 
p^{(n)}_{s(e_n), e_n}.
\end{equation}
Relation \eqref{eq m([e])} defines the value of the measure $m$ 
of the set $[\ol e]$.
By \eqref{defn of P_n}(b),  this measure satisfies the 
\textit{Kolmogorov consistency condition} and can be extended to the
$\sigma$-algebra of  Borel sets. 
To emphasize that $m$ is generated by a sequence of 
\textit{stochastic matrices}, we will also write $m = m(P_n)$.

If all stochastic matrices $P_n$ are equal to a matrix $P$, then 
the corresponding measure $m(P)$ is \textit{called a stationary Markov
measure.}
\end{definition}

\begin{definition} \label{def HI measure}
Let $B$ be a horizontally stationary generalized Bratteli diagram and $m = m(P_n)$ a Markov measure on the path space $X_B$. We say that $m$ is a \textit{horizontally invariant measure}
if $m([\ol e]) = m([\ol e'])$ for any two parallel cylinder sets
$[\ol e]$ and $[\ol e']$.
\end{definition}

The following result follows immediately from \eqref{eq m([e])} and
Definition \ref{def HI measure}. 

\begin{lemma}\label{lem HI}
(1) For a horizontally stationary generalized Bratteli diagram $B$, 
a Markov measure $m = m(P_n)$ is horizontally stationary on $X_B$
if and only if, for every $n \in \N_0$ and $i, j \in V_n$, 
$p^{(n)}_{i, e} =  p^{(n)}_{j,f}$ where $e$ is parallel to $f$ and 
the initial distribution is a constant vector. 

(2) Every horizontally invariant measure on $B$ is sigma-finite. 
\end{lemma}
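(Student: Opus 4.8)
The plan is to read both statements off formula \eqref{eq m([e])} for the value of a Markov measure on a cylinder, handling the substantive direction (the ``only if'' part of (1)) by separating the initial distribution from the transition probabilities and then stripping off the transition factors one level at a time. The ``if'' direction of (1) is a direct substitution: if $[\ol e]$ and $[\ol e']$ are parallel, then (as noted after Remark~\ref{rem parallel}, the shift is common along the path) we may write $\ol e = (e_0,\dots,e_n)$, $\ol e' = (f_0,\dots,f_n)$ with $f_i$ parallel to $e_i$ and $s(f_i) = s(e_i) + k$ for a fixed $k$. Then \eqref{eq m([e])} gives $m([\ol e]) = q_{s(e_0)}\prod_{i=0}^{n} p^{(i)}_{s(e_i),e_i}$ and $m([\ol e']) = q_{s(e_0)+k}\prod_{i=0}^{n} p^{(i)}_{s(e_i)+k,f_i}$; a constant initial vector kills the first-factor discrepancy and the hypothesis $p^{(i)}_{s(e_i),e_i} = p^{(i)}_{s(f_i),f_i}$ matches the remaining factors one by one, so $m([\ol e]) = m([\ol e'])$.

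For the ``only if'' direction I would first recover that $q$ is constant. Fix $v\in V_0$ and $k\in\Z$; horizontal stationarity yields a bijection between the edges leaving $v$ and those leaving $v+k$ which pairs parallel edges (match edges into $w$ with edges into $w+k$). Summing the equalities $m([e_0]) = m([f_0])$ over this bijection and using $\sum_{s(e_0)=v} p^{(0)}_{v,e_0} = 1$ from \eqref{defn of P_n}(b) gives $q_v = m(X_v^{(0)}) = m(X_{v+k}^{(0)}) = q_{v+k}$, so $q \equiv c$. Next I would prove $p^{(n)}_{s(e),e} = p^{(n)}_{s(f),f}$ for parallel $e,f\in E_n$ by induction on $n$: the case $n=0$ is immediate from $m([e]) = m([f])$ and $q\equiv c$. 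For the step, choose a finite path $(e_0,\dots,e_{n-1})$ from $V_0$ ending at $s(e)$ (such a path exists in the non-degenerate case the paper works in, where every vertex is reachable), and build, using translation equivariance of $r^{-1}$, a parallel path $(f_0,\dots,f_{n-1})$ ending at $s(f)$; then $[(e_0,\dots,e_{n-1},e)]$ and $[(f_0,\dots,f_{n-1},f)]$ are parallel cylinders, the inductive hypothesis identifies their first $n$ transition factors, all strictly positive by \eqref{defn of P_n}(a) and hence cancellable, and what remains is $p^{(n)}_{s(e),e} = p^{(n)}_{s(f),f}$.

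Part (2) is an immediate structural remark and in fact does not use horizontal invariance: $(X_v^{(0)} : v\in V_0)$ is a clopen partition of $X_B$, and summing \eqref{eq m([e])} over the one-edge cylinders with source $v$ together with \eqref{defn of P_n}(b) gives $m(X_v^{(0)}) = q_v < \infty$; since $V_0$ is countable, $X_B$ is a countable union of sets of finite measure, so $m$ is $\sigma$-finite (and, as $\sum_{v} q_v = \infty$ when the $q_v$ are bounded below, typically infinite). The only genuine obstacle is the bookkeeping in the inductive step of (1): one must make sure the transition factors being cancelled are really positive and already known to coincide on parallel edges, which is exactly why the induction must be run level by level rather than attempted at once; a minor point, sidestepped by the standing non-degeneracy assumption, is that edges lying on no path from $V_0$ carry no mass and are therefore irrelevant to (hence should be excluded from) the characterization.
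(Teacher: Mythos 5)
Your proof is correct and follows exactly the route the paper intends: the paper states that the lemma ``follows immediately from \eqref{eq m([e])} and Definition \ref{def HI measure}'' and gives no further argument, and your write-up is precisely the routine verification being alluded to (read both directions off the product formula for cylinder measures, cancel the strictly positive transition factors level by level, and note that $X_B=\bigcup_{v\in V_0}X_v^{(0)}$ with $m(X_v^{(0)})=q_v<\infty$ gives $\sigma$-finiteness). Your added remarks on reachability are harmless but unnecessary, since Definition \ref{Def:generalized_BD}(ii) already forces every vertex of $V_{n+1}$ to have an incoming edge, so every vertex is reachable from $V_0$.
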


It follows from Lemma \ref{lem HI} that, for a Markov measure $m = m(P_n)$, the condition of horizontal invariance can be written in the form:
\be\label{eq4 HI}
p^{(n)}_{s(e), e} =  p^{(n)}_{s(\tau^k(e)),\tau^k(e)},\ \ k \in \Z.
\ee 
Relation $\eqref{eq4 HI}$ shows that horizontally invariant
measures are the measures that are invariant under the horizontal shift
 $\tau$.

Let $m =m(P_n)$ be the Markov measure on a horizontally stationary 
generalized Bratteli diagram such that 
\be\label{eq4 uniform meas}
p^{(n)}_{i, e} = \frac{1}{|s^{-1}(i)|}, \ \  i \in V_n, \ \ n \in \Z.
\ee
We call $m$ the \textit{uniform} Markov measure.

\begin{theorem}
Let $m =m(P_n)$ be a horizontally invariant measure on the path space
of a horizontally stationary generalized Bratteli diagram $B = B(F_n)$.
The measure $m$ is tail invariant if and only if it is uniform.
\end{theorem}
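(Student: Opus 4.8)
The plan is to prove both directions by translating the tail invariance condition \eqref{eq:formula_p_n} into a statement about the Markov transition matrices, using the fact that horizontal invariance forces all the data to depend only on the ``shape'' of edges, not on absolute position. First I would fix notation: for each $n$, let the incidence matrix $F_n$ be encoded by the sequence $b^{(n)}=(b^{(n)}_k)$ as in \eqref{eq3:Toeplitz m-x}, so that an edge of ``type $k$'' runs from a vertex $j\in V_n$ to the vertex $j-k\in V_{n+1}$ (there are $b^{(n)}_k$ such parallel edges for each pair). By Lemma \ref{lem HI}, horizontal invariance means the transition probabilities $p^{(n)}_{s(e),e}$ depend only on the type of $e$ (and which of the $b^{(n)}_k$ parallel copies it is); write $p^{(n)}_{k,\ell}$ for the probability assigned to the $\ell$-th edge of type $k$, $\ell=1,\dots,b^{(n)}_k$, so that $\sum_k \sum_{\ell=1}^{b^{(n)}_k} p^{(n)}_{k,\ell}=1$ for every vertex. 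The uniform measure is the special case $p^{(n)}_{k,\ell}=1/|s^{-1}(i)| = 1/c_n$ for all $k,\ell$, where $c_n=\sum_k b^{(n)}_k = r_n$ is the common row/column sum from \eqref{eq:ERS+ECS}.

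Next I would compute, for a horizontally invariant Markov measure $m$, the value $p^{(n)}_w := m([\ol e])$ for a path $\ol e$ ending at $w\in V_n$. Because $m$ is horizontally invariant, this is independent of $w$; call it $\pi_n$. Iterating \eqref{eq m([e])} along a minimal-length spine, or more cleanly summing over all paths ending at $w$ and using $\sum_v H^{(n)}_v \cdots$, one finds that tail invariance of $m$ is equivalent to the relation $(F_n)^T p^{(n+1)}=p^{(n)}$ of Theorem \ref{thm:mu_as_inv_lim}, which in the horizontally invariant case collapses to the scalar identity $c_n\,\pi_{n+1}=\pi_n$, i.e. $\pi_n = \pi_{n+1}\sum_k b^{(n)}_k$. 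The key point is then to see \emph{which} Markov measures realize this: the measure of a cylinder $[\ol e]$ computed from \eqref{eq m([e])} is $q_{s(e_0)}\prod_{i=0}^{n}p^{(i)}_{k_i,\ell_i}$, and for this to equal $\pi_{n+1}$ for \emph{every} path $\ol e$ ending at a given vertex, each factor $p^{(i)}_{k_i,\ell_i}$ must be the same for all admissible choices of $(k_i,\ell_i)$ — otherwise two cylinder sets with the same range would get different mass, contradicting tail invariance (here I use Definition \ref{Def:Tail_equiv_relation}(ii) directly, comparing $[\ol e]$ and $[\ol e']$ with $r(e_n)=r(e'_n)$ that differ in exactly one coordinate). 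Hence $p^{(n)}_{k,\ell}$ is independent of $(k,\ell)$, and the normalization $\sum_{k,\ell}p^{(n)}_{k,\ell}=1$ forces the common value to be $1/c_n = 1/|s^{-1}(i)|$, which is exactly \eqref{eq4 uniform meas}. The converse is the easy direction: if $m$ is uniform, then any two cylinder sets with a common range vertex have the same number of factors and each factor equals $1/c_i$, so they get equal mass, giving tail invariance directly; and checking the two cases $r(e_n)=r(e'_n)$ with $\ol e,\ol e'$ of equal length is enough since $B$ being horizontally stationary (hence $ERS$) guarantees equal path counts to a common vertex.

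The main obstacle I anticipate is the bookkeeping in the forward direction: making precise the claim ``tail invariance forces each transition factor to be constant across admissible edge types'' without circular reasoning. The clean way is to argue \emph{locally at a single level}: fix $n$ and a vertex $v\in V_{n+1}$; pick two edges $e,e'\in r^{-1}(v)$ of different types $k\neq k'$ (possible precisely in the non-degenerate banded case $q>0$ or $p>0$ assumed throughout); extend each to a finite path from $V_0$ using fixed minimal completions below, obtaining cylinders $[\ol g e]$ and $[\ol g' e']$ with $r(e)=r(e')=v$; tail invariance gives them equal measure, and since the ``$\ol g$'' parts can be chosen so that their contributions differ only through $p$-factors at levels $<n$ which are themselves constrained inductively, one peels off one level at a time. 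An alternative, perhaps cleaner, route avoids induction entirely: observe that a horizontally invariant $m$ is invariant under the $\Z$-action $\tau$ on $X_B$, so its restriction to each Kakutani–Rokhlin tower $X^{(n)}_w$ has constant mass in $w$; combine this with tail invariance within a single tower (which equidistributes among the $H^{(n)}_w$ cylinder sets over a fixed base) to conclude that $m$ is the unique $\tau\times\mathcal R$-invariant measure normalized suitably on towers, and then simply verify that the uniform Markov measure has this property and is the only Markov measure that does. I would present the local single-level argument as the primary proof and remark on the symmetry argument as motivation.
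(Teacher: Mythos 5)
Your proposal is correct and takes essentially the same route as the paper: the converse is proved by peeling off one level at a time, comparing cylinder sets that end at a common vertex and using horizontal invariance (constant initial distribution, transition probabilities depending only on the edge type) to force each $P_n$ to be uniform, while the forward direction is the constant-vector verification of the recursion $(F_n)^T p^{(n+1)}=p^{(n)}$. The paper merely packages your level-$n$ comparison as a parallelism bijection between $s^{-1}(i)$ ordered left-to-right and $r^{-1}(k)$ ordered right-to-left; when writing this up, use your ``local single-level'' induction rather than the one-coordinate-change heuristic, since changing an edge's type also changes its source and hence the preceding edge.
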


\begin{proof}
Suppose that $m$ is a uniform horizontally invariant measure. 
We recall that the matrices $F_n$ (and $A_n = F_n^T$) have the $ERS(r_n)$ and $ECS(r_n)$
properties where $r_n = |s^{-1}(i)|$, $i \in V_n$.
Define the vectors $p^{(n)}$ associated to the levels $V_n$ as follows: set $p^{(0)} = (\ldots, 1, 1, 1, \ldots)$ and 
$p^{(n)} = (p^{(n)}_i : i \in V_n)$ where 
$$
p^{(n)}_i = \frac{1}{r_0\ \cdots\ r_{n-1}}, \quad i \in V_n. 
$$
Because of the properties of matrices $F_n$ and the fact that 
$p^{(n)}$ is a constant vector, we easily obtain that
$A_n p^{(n+1)} = p^{(n)}$ for all $n$. By Theorem 
\ref{BKMS_measures=invlimits}, we conclude that $m$ is tail 
invariant.

Conversely, suppose that a horizontally invariant Markov measure $m(P_n)$ is tail invariant. Firstly, if $q_i = m([i]), i \in V_0$,
then $q_i$ does not depend on $i$. Here $[i]$ is the set of infinite
paths beginning at $i$. 

For every vertex $i \in V_0$, we enumerate the edges from
the set $s^{-1}(i)$ from left to right, 
$e_1, \ldots, e_{r_0}$. The matrix $P_0$ assigns the probabilities 
to these edges (cylinder sets), $p^{(0)}_{i, e_1}, \ldots, 
p^{(0)}_{i, e_{r_0}}$. Simplify the notation and write 
$p^{(0)}_{1}, \ldots, 
p^{(0)}_{r_0}$ because these probabilities do not depend on $i$.
Take a vertex $k \in V_1$ and consider the set $r^{-1}(k) = \{f_1, \ldots, f_{r_0}\}$ of all edges with range $k$. We use here 
Proposition \ref{Prop:prop_hor_gbd} (5) stating that $|r^{-1}(k)| 
= |s^{-1}(i)|$. If we enumerate the edges from $r^{-1}(k)$ from right to left, then the edges $e_j$ and $f_j$ are parallel, $j = 1, ..., r_0$. Therefore, the $m$-measure of the cylinder set $[f_j]$ equals 
$p^{(0)}_j = m([e_j])$ because $m$ is horizontally invariant. 
Since $m$ is tail invariant, we conclude that 
$$
p^{(0)}_0 = \ldots = p^{(0)}_{r_0} = \frac{1}{r_0}.
$$
This means that the rows of $P_0$ are represented by constant 
vectors with the same entry $r_0^{-1}$.

For the next step, we take the matrix $P_1$ and apply the same approach as we used for the matrix $P_0$. We will see that all entries of $P_1$ are equal to $r_1^{-1}$, etc. 
This shows that $m(P_n)$ is the uniform measure.
\end{proof}

\section{Vershik map on horizontally stationary Bratteli diagram}

\begin{definition}\label{def: stat ordered}
Let $B$ be a horizontally stationary generalized Bratteli diagram. 
We say that an order $\omega$ on $B$ is {\em horizontally stationary}
if for every $n\geq 1$, for every $i$ and $i'$ in $V_n$, the sets
$r^{-1}(i)$ and $r^{-1}(i')$ are identically ordered. 
\end{definition}

This means that the edges $e \in E(j, i)$ and the edges $e' \in 
E(j+k, i+k)$ are labeled by the same numbers.
As an obvious consequence of the definition, we note that $e_{\max} \in r^{-1}(i)$ and
$e'_{\max} \in r^{-1}(i')$ are parallel edges. The same is true 
for minimal edges. Therefore every vertex has exactly one outgoing minimal edge and one outgoing maximal edge.
It is easy to see that the following lemma holds.

\begin{lemma}\label{Lemma:horiz_stat_order}
Let $B$ be a horizontally stationary generalized Bratteli diagram with a horizontally stationary order. Then

(1) Every vertex $i \in V_0$ is the source for a unique minimal infinite path $x_{\min}(i)$
and a unique maximal infinite path $x_{\max}(i)$. 
The sets $X_{\max}$ and $X_{\min}$ are countable.

(2) For $i, i' \in V_0$, the infinite paths $x_{\min}(i)$ and $x_{\min}(i')$ consist 
of pairwise parallel edges. The same holds for any pair of infinite
maximal paths.
\end{lemma}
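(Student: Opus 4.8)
The plan is to deduce both parts from the single structural fact recorded in the paragraph just before the statement: horizontal stationarity of the order forces every vertex $v \in V$ to be the source of exactly one minimal outgoing edge $e_{\min}(v)$ and exactly one maximal outgoing edge $e_{\max}(v)$. Granting this, part (1) becomes an inductive construction. Fix $i \in V_0$, set $e_0 = e_{\min}(i)$, and recursively put $e_{n+1} = e_{\min}(r(e_n))$. Since $s(e_{n+1}) = r(e_n)$, this produces a well-defined infinite path $x_{\min}(i) \in X_B$, and by construction each $e_n$ is minimal inside its fibre $r^{-1}(r(e_n))$, so $x_{\min}(i) \in X_{\min}$. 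For uniqueness I would run the recursion in reverse: if $x = (x_n) \in X_{\min}$ with $s(x_0) = i$, then $x_0$ is a minimal outgoing edge from $i$, hence $x_0 = e_{\min}(i) = e_0$, and inductively $x_{n+1} = e_{\min}(r(x_n)) = e_{n+1}$, so $x = x_{\min}(i)$. This shows $X_{\min} = \{ x_{\min}(i) : i \in V_0 \}$, and since $V_0$ is identified with $\Z$, $X_{\min}$ is countable. The maximal case is verbatim with $e_{\max}$ in place of $e_{\min}$.

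For part (2) the plan is to track how the horizontal shift $\tau$ of Remark~\ref{rem parallel} acts on these canonical paths. The first step is to observe that, because the order is horizontally stationary (Definition~\ref{def: stat ordered}), for every $w \in V \setminus V_0$ and every $k \in \Z$ the map $\tau^k$ restricts to an order isomorphism $r^{-1}(w) \to r^{-1}(\tau^k w)$; in particular it carries the minimal (resp. maximal) edge of $r^{-1}(w)$ to the minimal (resp. maximal) edge of $r^{-1}(\tau^k w)$, so that $e_{\min}(\tau^k v) = \tau^k(e_{\min}(v))$ for every vertex $v$. Then, writing $k = i' - i$, $x_{\min}(i) = (e_n)$ and $x_{\min}(i') = (e_n')$, I would prove $e_n' = \tau^k(e_n)$ by induction on $n$: the base case is $e_0' = e_{\min}(i') = e_{\min}(\tau^k i) = \tau^k(e_{\min}(i)) = \tau^k(e_0)$, and the inductive step uses $s(e_{n+1}') = r(e_n') = r(\tau^k e_n) = \tau^k(r(e_n))$ together with the commutation of $\tau^k$ with $e_{\min}$ to get $e_{n+1}' = e_{\min}(\tau^k r(e_n)) = \tau^k(e_{n+1})$. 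This says exactly that $e_n$ and $e_n'$ are parallel for all $n$, i.e. $x_{\min}(i)$ and $x_{\min}(i')$ are parallel, and the maximal case is identical.

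The only genuinely delicate point is the structural fact underlying everything — that each vertex has a unique minimal (resp. maximal) outgoing edge — which is why I would lean on the discussion preceding the statement: horizontal stationarity of the diagram and the order makes the source of the extreme edge of $r^{-1}(i)$ sit at a fixed offset from $i$ independent of $i$, so $i \mapsto s(e_{\min}(i))$ is a bijection of $\Z$, whence each vertex is hit exactly once. Everything after that is bookkeeping: the two inductions in (1) and (2) are routine once one is careful that "minimal outgoing edge of $v$" denotes the edge $e$ with $s(e) = v$ that is minimal inside $r^{-1}(r(e))$, so that concatenating such edges genuinely yields a path in $X_{\min}$ rather than just a sequence of locally minimal edges.
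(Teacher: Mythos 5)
Your proposal is correct and follows exactly the route the paper intends: the paper proves only the key structural fact (each vertex has a unique minimal and a unique maximal outgoing edge, via the fixed-offset/bijection argument you give) and then declares the lemma "easy to see"; your two inductions and the $\tau^k$-equivariance of the extreme edges are precisely the omitted bookkeeping. No gaps.
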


The following theorem gives necessary and sufficient conditions for a Vershik map on a horizontally stationary ordered Bratteli diagram to be extended to a homeomorphism of $X_B$.

\begin{theorem}\label{Thm:continVmap}
Let $B(\omega)$ be a horizontally stationary generalized Bratteli diagram with a horizontally stationary order $\omega$. Then the order $\omega$ 
defines a Vershik homeomorphism $\varphi_B$ if and only if for all $n$ large enough and for every infinite maximal path $x_{\max} = (x^{(n)}_{\max})$, all non-maximal edges with the source $r(x^{(n-1)}_{\max})$ have successors with the same source $v_{n} \in V_n$ ($v_n$ depends on the choice of $x_{\max}$) and there is a minimal edge between $v_{n}$ and $v_{n+1}$. In the case when $\varphi_B$ cannot be extended to a homeomorphism, every infinite maximal path is a point of discontinuity for any extension of $\varphi_B$.

\end{theorem}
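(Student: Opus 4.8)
The plan is to analyze, for a fixed infinite maximal path $x_{\max}=(x_{\max}^{(n)})$, the behaviour of $\varphi_B(y)$ as $y\to x_{\max}$ with $y\notin X_{\max}$, and to show that the condition in the statement is exactly what makes this one-sided limit exist. Three preliminary facts, all consequences of horizontal stationarity of the order, will drive everything. First, the shift $\tau$ of Remark \ref{rem parallel} acts on ordered edges, so the ``offsets'' $c_n:=r(e)-s(e)$ of the minimal edge $e\in E_n$, $\beta_n:=r(f)-s(f)$ of the maximal edge $f\in E_n$, and $d_n$ (defined, when it makes sense, by ``the successor of a non-maximal edge with source $u$ has source $u+d_n$'') are constant along each level. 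Second, as noted after Definition \ref{def: stat ordered}, each vertex has a unique outgoing maximal edge and a unique outgoing minimal edge; hence a path $y$ agreeing with $x_{\max}$ on coordinates $0,\dots,N-1$ and differing at coordinate $N$ must have $y_N$ \emph{non-maximal} with source $r(x_{\max}^{(N-1)})$, its first non-maximal coordinate is exactly $N$, and $N\to\infty$ as $y\to x_{\max}$. Third, the minimal path from $V_0$ to a vertex $w\in V_n$ is unique (as already used in Definition \ref{Def:VershikMap}), and since the $c_j$ are constant, the minimal paths to two distinct vertices of $V_n$ differ in \emph{every} coordinate $0,\dots,n-1$.

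Given these, the core computation is read off from Definition \ref{Def:VershikMap}: for such a $y$, $\varphi_B(y)$ agrees on coordinates $0,\dots,N-1$ with the minimal path to $s(g_N)$, where $g_N$ is the successor of $y_N$ in $r^{-1}(r(y_N))$. If the successor--source map is single valued at all large levels, this vertex equals $v_N=r(x_{\max}^{(N-1)})+d_N$, independently of the choice of $y_N$ and of the tail of $y$; and the truncation to levels $0,\dots,k-1$ of the minimal path to $v_N\in V_N$ is the minimal path to $v_N-(c_{N-1}+\dots+c_k)$. Thus $\lim_{y\to x_{\max}}\varphi_B(y)$ exists iff for each $k$ this vertex stabilizes in $N$, i.e.\ iff $v_{N+1}-c_N=v_N$ for all large $N$ — that is, iff there is a minimal edge from $v_N$ to $v_{N+1}$, which is the stated condition. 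When it holds, the limit $z=\bar\varphi(x_{\max})$ is the path with vertex $v_k$ at each large level $k$, built entirely from minimal edges, so $z\in X_{\min}$. Writing $v_{N+1}-v_N=\beta_N+d_{N+1}-d_N$ (using that $x_{\max}^{(N)}$ is a maximal edge) shows that the condition at level $N$ depends only on $\om$ at level $N$, so it holds for one maximal path and all large $n$ iff it holds for all of them.

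For sufficiency I would set $\bar\varphi=\varphi_B$ off $X_{\max}$ and $\bar\varphi(x_{\max})=z$. Continuity off $X_{\max}$ is routine; continuity at each $x_{\max}$ follows from the computation above, since an approaching sequence splits into its non-maximal part, handled above, and a maximal part $x'_{\max}\to x_{\max}$, for which the associated $v'_n$ agree with the $v_n$ on the coordinates where $x'_{\max}$ and $x_{\max}$ agree. That $\bar\varphi$ is a homeomorphism, not merely a continuous bijection, is obtained in two further steps. (a) $\bar\varphi|_{X_{\max}}:X_{\max}\to X_{\min}$ is a bijection: injectivity, because $v_n=r(x_{\max}^{(n-1)})+d_n$ recovers $r(x_{\max}^{(n-1)})$ and hence — maximal edges being determined by their ranges — all of $x_{\max}$; surjectivity, because given a minimal path $z$ the relation $d_{N+1}-d_N=c_N-\beta_N$ forced by the condition makes the vertices $r(z_{n-1})-d_n$ into a maximal path mapping to $z$. (b) The same analysis applies to the reversed order $\om^{\mathrm{op}}$, whose Vershik map is $\varphi_B^{-1}$: every non-minimal edge is the successor of a non-maximal one, so the predecessor--source offset of $\om$ equals $-d_n$, the single-valuedness conditions for $\om$ and $\om^{\mathrm{op}}$ coincide, and $d_{N+1}-d_N=c_N-\beta_N$ is invariant under $(c_n,\beta_n,d_n)\mapsto(\beta_n,c_n,-d_n)$; thus the condition is self-dual, $\varphi_B^{-1}$ extends continuously, and the two extensions — agreeing with $\varphi_B^{\pm1}$ on the dense set $X_B\setminus X_{\max}$ — are mutually inverse.

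For necessity and the final assertion, suppose the condition fails; by its level-wise character, every $x_{\max}$ then has infinitely many ``bad'' levels $n$: either two non-maximal edges with source $r(x_{\max}^{(n-1)})$ have successors with distinct sources, or the map is single valued at levels $n,n+1$ but no minimal edge joins $v_n$ to $v_{n+1}$. In the first case pick $y,\tilde y$ agreeing with $x_{\max}$ up to coordinate $n-1$ with those two edges as their $n$-th coordinates; in the second pick $y$ agreeing up to coordinate $n-1$ with $y_n$ non-maximal, and $\tilde y$ agreeing up to coordinate $n$ with $\tilde y_{n+1}$ non-maximal. In either case $\varphi_B(y)$ and $\varphi_B(\tilde y)$ are minimal paths to two distinct vertices, so they differ already in coordinate $0$, while both lie within $2^{-n}$ of $x_{\max}$. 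Letting $n$ run over the bad levels yields sequences $y^{(j)},\tilde y^{(j)}\to x_{\max}$ whose images differ in coordinate $0$ for every $j$, so $\lim_{y\to x_{\max}}\varphi_B(y)$ does not exist; hence no extension of $\varphi_B$ is continuous at $x_{\max}$, which proves both necessity and the last sentence. The main obstacle is the middle computation — getting the precise equivalence between convergence of $\varphi_B(y)$ and the stated condition — and it is exactly the constancy of the offsets $c_n,\beta_n$ and the uniqueness of outgoing extremal edges in a horizontally stationary order that make it go through.
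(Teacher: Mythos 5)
Your proposal is correct and follows essentially the same route as the paper's proof: both analyze continuity at a maximal path by tracking where nearby non-maximal paths are sent --- namely to the minimal path ending at the source of the successor edge --- and both reduce convergence to the single-valuedness of the successor--source map together with the existence of a minimal edge from $v_n$ to $v_{n+1}$, with the final discontinuity assertion following from horizontal stationarity. Your explicit offset bookkeeping ($c_n$, $\beta_n$, $d_n$), the verification that the extension restricts to a bijection $X_{\max}\to X_{\min}$, and the self-duality check under order reversal merely fill in details that the paper dispatches with ``using the same arguments, it is easy to see that $\varphi_B^{-1}$ is also continuous.''
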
 

\begin{proof}
First, we prove the ``only if'' part.
    Fix any $x_{\max} \in X_B$ and assume that $\varphi_B$ is a continuous bijection with $\varphi_B(x_{\max}) = x_{\min}$ for some $x_{\min} = (x_{\min}^{(n)})$. We prove that for all $n$ large enough, all non-maximal edges with the source $r(x^{(n-1)}_{\max})$ have successors with the source $v_{n} = r(x_{\min}^{(n-1)})$. 
    
    Note that if for infinitely many $n \in \mathbb{N}$ there are two non-maximal edges $e^{(n)}_1$, $e^{(n)}_2$ with the sources $s(e^{(n)}_1) = s(e^{(n)}_2) = r(x^{(n-1)}_{\max})$ and such that their successors $e'^{(n)}_1$ and $e'^{(n)}_2$ have different sources, then the Vershik map cannot be continuous. Indeed, in this case, there are two non-maximal infinite paths that coincide with $x_{\max}$ on levels $0, \ldots, n-1$, and then go through the non-maximal edges $e^{(n)}_1$, $e^{(n)}_2$. These paths will be mapped to the neighborhoods of infinite minimal paths that pass through the sources of the successors $s(e'^{(n)}_1)$, $s(e'^{(n)}_2)$. 
    By Lemma~\ref{Lemma:horiz_stat_order}, 
    these minimal paths will also start at different vertices at level $V_0$, and thus they will have a distance $1$ from each other. Hence we cannot choose the image of $x_{\max}$ in a continuous way.

Now assume that there is $N$ such that for all $n > N$, all non-maximal edges with the source $r(x^{(n-1)}_{\max})$ have successors with the same source $v_{n}$.  It follows from horizontal stationarity that the same situation will occur for every vertex of level $n$.
Recall that we assume there is a continuous extension of the Vershik map and $\varphi_B(x_{\max}) = x_{\min}$. Then there is $N_1 > N$ such that, for every $n > N_1$, we have $v_n = s(x_{\min}^{(n)})$. Indeed, if for infinitely many $n$ we had $v_n \neq s(x_{\min}^{(n)})$, then we could choose non-maximal paths arbitrarily close to $x_{\max}$ that would be mapped to neighborhoods of minimal paths different from $x_{\min}$ and which are at distance $1$ from $x_{\min}$. Thus, for all $n$ large enough we have $v_n = s(x_{\min}^{(n)})$, hence there is a minimal edge between $v_{n}$ and $v_{n+1}$. 

Now we prove the ``if'' part. Since for all $n$ large enough there is a minimal edge between $v_{n}$ and $v_{n+1}$, there is a unique infinite minimal path $x_{\min}$ which passes through $v_n$ for all $n$ large enough. Since for all such $n$, the non-maximal edges with the source $r(x^{(n-1)}_{\max})$ have successors with the source $v_{n} = r(x_{\min}^{(n-1)})$, we obtain that the Vershik map $\varphi_B$ is continuous. 
Using the same arguments, it is easy to see that $\varphi_B^{-1}$ is also continuous.

Because of horizontal stationarity,  if $\varphi_B$ is discontinuous for one infinite maximal path, then all infinite maximal paths are points of discontinuity for  $\varphi_B$.
\end{proof}

\begin{remark}
    To check the continuity of $\varphi_B^{-1}$ it is enough to ``reverse'' the order $\omega$ (i.e., to define the reversed order $\omega'$, we set  $e <_{\omega} f\  \Longrightarrow \
    e >_{\omega'} f$) and check continuity of $\varphi_B$ for the reversed order $\omega'$.
\end{remark}

The following examples illustrate the statement of Theorem~\ref{Thm:continVmap}. 

\ignore{
\begin{example}[Vershik map can be extended to a  homeomorphism]\label{Ex:contin_V_map}
Let $B  = (B,\omega)$ be a 
two-sided generalized Bratteli diagram of bounded size with the 
corresponding sequence $L_n = 2$ and $t_n = 2^{n-1}$ for $n \in \N$ with $t_0 = 1$ and left-to-right ordering $\omega$. 
The diagram $B = (V,E)$ is shown in Figure \ref{Fig:ContinVMap}. It is easy to see that $(B, \omega)$ satisfies conditions of Theorem~\ref{Thm:continVmap} and $\varphi_B$ is a homeomorphism. 
    \begin{figure}[ht]
\unitlength=0,7cm
\begin{graph}(11,8)
 \roundnode{V11}(2,7)
 \roundnode{V12}(4,7)
 \roundnode{V13}(6,7)
 \roundnode{V14}(8,7)
  \roundnode{V15}(10,7)
  %
   \roundnode{V21}(2,5)
 \roundnode{V22}(4,5)
 \roundnode{V23}(6,5)
 \roundnode{V24}(8,5)
  \roundnode{V25}(10,5)
 \roundnode{V31}(2,3)
 \roundnode{V32}(4,3)
 \roundnode{V33}(6,3)
  \roundnode{V34}(8,3)
    \roundnode{V35}(10,3)
  \roundnode{V41}(2,1)
 \roundnode{V42}(4,1)
 \roundnode{V43}(6,1)
  \roundnode{V44}(8,1)
    \roundnode{V45}(10,1)
 %
 \graphlinewidth{0.025}
     \edge{V22}{V11}
    \edge{V21}{V12}
  \edge{V23}{V12}
    \edge{V22}{V13}
   \edge{V24}{V13}
    \edge{V23}{V14}
   \edge{V25}{V14}
    \edge{V24}{V15}
\edge{V32}{V21}
    \edge{V31}{V22}
  \edge{V33}{V22}
    \edge{V32}{V23}
   \edge{V34}{V23}
    \edge{V33}{V24}
   \edge{V35}{V24}
    \edge{V34}{V25}
    \edge{V41}{V33}
 \edge{V43}{V31}
    \edge{V42}{V34}
   \edge{V44}{V32}
    \edge{V43}{V35}
   \edge{V45}{V33}
 \freetext(10.9,7){$\ldots$}
  \freetext(10.9,5){$\ldots$}
 \freetext(10.9,3){$\ldots$}
  \freetext(10.9,1){$\ldots$}
   \freetext(1,7){$\ldots$}
  \freetext(1,5){$\ldots$}  
   \freetext(1,3){$\ldots$}
  \freetext(1,1){$\ldots$}
  \freetext(2,0.5){$\vdots$}
  \freetext(4,0.5){$\vdots$}
    \freetext(6,0.5){$\vdots$}
      \freetext(8,0.5){$\vdots$}  
        \freetext(10,0.5){$\vdots$} 
\end{graph}
\caption{A continuous Vershik map (left-to-right 
ordering)}\label{Fig:ContinVMap}
\end{figure}
\end{example}}

\begin{example}[Vershik map can be extended to a  homeomorphism]
The diagram on Figure~\ref{Fig:ContinVMap1} satisfies conditions of Theorem~\ref{Thm:continVmap} for all $n$. Every vertex on every level $n \geq 1$ has three incoming edges, the ordering is left-to-right, and all maximal edges of the diagram are vertical. For every $n \geq 0$, the edges of level $E_n$ determine how to draw the edges of level $E_{n+1}$ in order to satisfy conditions of Theorem~\ref{Thm:continVmap}. For an infinite maximal path that passes through vertex $w$ on each level, its image under the Vershik map is a minimal path that passes through vertices $v_n = w + 3^n$. 

\begin{center}

\begin{figure}[ht]
\unitlength=0,8cm
\begin{tikzpicture}[shorten >=1pt,node distance=2cm,auto]
\tikzstyle{state}=[shape=circle,fill,inner sep=0pt,minimum size=4pt]
\node[state] (A1) {};
\node[state,above of=A1] (B1) {}; 
\node[state,above of=B1] (C1) {};
\node[state,right of=A1] (A2) {};
\node[state,right of=A2] (A3) {};
\node[state,right of=A3] (A4) {};
\node[state,right of=A4] (A5) {};
\node[state,right of=A5] (A6) {};
\node[state,right of=A6] (A7) {};
\node[state,above of=A2] (B2) {};
\node[state,above of=A3] (B3) {};
\node[state,above of=A4] (B4) {};
\node[state,above of=A5] (B5) {};
\node[state,above of=A6] (B6) {};
\node[state,above of=A7] (B7) {};
\node[state,above of=B2] (C2) {};
\node[state,above of=B3] (C3) {};
\node[state,above of=B4] (C4) {};
\node[state,above of=B5] (C5) {};
\node[state,above of=B6] (C6) {};
\node[state,above of=B7] (C7) {};
\path[-,draw,thick]
  (A1) edge node[] {} (B1)
  (A2) edge node[] {} (B2)
  (A3) edge node[] {} (B3)
  (A4) edge node[] {} (B4)
  (A5) edge node[] {} (B5)
  (A6) edge node[] {} (B6)
  (A7) edge node[] {} (B7)
  (C1) edge node[] {} (B1)
  (C2) edge node[] {} (B2)
  (C3) edge node[] {} (B3)
  (C4) edge node[] {} (B4)
  (C5) edge node[] {} (B5)
  (C6) edge node[] {} (B6)
  (C7) edge node[] {} (B7)
  (B2) edge node[] {} (C1)
  (B3) edge node[] {} (C2)
  (B4) edge node[] {} (C3)
  (B5) edge node[] {} (C4)
  (B6) edge node[] {} (C5)
  (B7) edge node[] {} (C6)
  (B3) edge node[] {} (C1)
  (B4) edge node[] {} (C2)
  (B5) edge node[] {} (C3)
  (B6) edge node[] {} (C4)
  (B7) edge node[] {} (C5)
  (A4) edge node[] {} (B1)
  (A5) edge node[] {} (B2)
  (A6) edge node[] {} (B3)
  (A7) edge node[] {} (B4)
  (A7) edge node[] {} (B1)
  (A3) -- ($ (A1) !.66! (B1) $)
  (B5) -- ($ (B7) !.66! (A7) $)
  (B6) -- ($ (B7) !.33! (A7) $)
  (A2) -- ($ (A1) !.33! (B1) $)
  (C6) -- ($ (C7) !.5! (B7) $)
  (A6) -- ($ (A1) !.8! (B1) $)  
  (A5) -- ($ (A1) !.6! (B1) $)  
  (A4) -- ($ (A1) !.4! (B1) $)
  (A3) -- ($ (A1) !.2! (B1) $)
  (B2) -- ($ (A7) !.2! (B7) $)  
  (B3) -- ($ (A7) !.4! (B7) $)  
  (B4) -- ($ (A7) !.6! (B7) $)
  (B5) -- ($ (A7) !.8! (B7) $)
  (B2) -- ($ (B1) !.5! (C1) $);
\draw (0,4.4) node {$w$};
\draw (2,4.4) node {$v_0$};
\draw (6.3,2.2) node {$v_1$};
\draw (12.6,4) node {$\ldots$};
\draw (12.6,2) node {$\ldots$};
\draw (12.6,0) node {$\ldots$};
\draw (-0.6,4) node {$\ldots$};
\draw (-0.6,2) node {$\ldots$};
\draw (-0.6,0) node {$\ldots$};
\draw (0,-0.5) node {$\vdots$};
\draw (2,-0.5) node {$\vdots$};
\draw (4,-0.5) node {$\vdots$};
\draw (6,-0.5) node {$\vdots$};
\draw (8,-0.5) node {$\vdots$};
\draw (10,-0.5) node {$\vdots$};
\draw (12,-0.5) node {$\vdots$};
\draw (1.4,2.1) node {$0$};
\draw (1.8,2.4) node {$1$};
\draw (2.2,2.5) node {$2$};
\draw (3.4,2.1) node {$0$};
\draw (3.8,2.4) node {$1$};
\draw (4.2,2.5) node {$2$};
\draw (5.4,2.1) node {$0$};
\draw (5.8,2.4) node {$1$};
\draw (6.2,2.5) node {$2$};
\draw (7.4,2.1) node {$0$};
\draw (7.8,2.4) node {$1$};
\draw (8.2,2.5) node {$2$};
\draw (9.4,2.1) node {$0$};
\draw (9.8,2.4) node {$1$};
\draw (10.2,2.5) node {$2$};
\draw (11.4,2.1) node {$0$};
\draw (11.8,2.4) node {$1$};
\draw (12.2,2.5) node {$2$};
\end{tikzpicture}
\caption{A continuous Vershik map on a horizontally stationary Bratteli diagram (left-to-right 
ordering, each vertex has three incoming edges, vertical edges are maximal)}\label{Fig:ContinVMap1}
\end{figure}
\end{center}

\end{example}

Another example of a diagram satisfying conditions of Theorem~\ref{Thm:continVmap} can be found in Example 3.13 of \cite{BezuglyiJorgensenKarpelSanadhya2023}. There every vertex from $V \setminus V_0$ has exactly two incoming edges.

\begin{example} This example shows that all conditions of Theorem 
\ref{Thm:continVmap} are important. 
Consider a vertically stationary and horizontally stationary generalized Bratteli diagram with the left-to-right order and incidence matrix $F = (f_{ij})$ given by
$$
f_{ij} = 
\left\{
\begin{aligned}
& 1, \mbox{ for } |i - j| \leq 1,\\
& 0, \mbox{ otherwise. }
\end{aligned}
\right.
$$
It is easy to see that that one of the conditions of Theorem~\ref{Thm:continVmap} is satisfied: for all $n$ and for every infinite maximal path $x_{\max} = (x^{(n)}_{\max})$, all non-maximal edges with the source $r(x^{(n-1)}_{\max})$ have successors with the same source $v_{n} = (r(x^{(n-1)}_{\max}) + 1) \in V_n$. But the second condition is not satisfied since there is only a maximal edge between vertices $v_{n} \in V_n$ and $v_{n+1} = (v_n - 1) \in V_{n+1}$. Hence Vershik map is not continuous for any extension of $\varphi_B$ to the whole $X_B$.
\end{example}

\medskip
\textbf{Acknowledgments.} 
We are very grateful to our colleagues, 
especially, H. Bruin, S. Radinger, T. Raszeja, S. Sanadhya, M. Wata for the numerous valuable discussions. We are also very grateful to the referee for the detailed comments which helped us to improve the exposition of the paper.
O.K. is supported by the NCN (National Science Centre, Poland) Grant 2019/35/D/ST1/01375 and by the program ``Excellence Initiative - Research University'' for the AGH University of Krakow.

\bibliographystyle{alpha}
\bibliography{bibliographyPascal}

\end{document}

\ignore{
\begin{tikzpicture}[shorten >=1pt,node distance=1.5cm,auto]
\tikzstyle{state}=[shape=circle,thick,draw,minimum size=0,3cm]
\node[state] (A1) {};
\node[state,above of=A1] (B1) {}; 
\node[state,above of=B1] (C1) {};
\node[state,right of=A1] (A2) {};
\node[state,right of=A2] (A3) {};
\node[state,above of=A2] (B2) {};
\node[state,above of=B2] (C2) {};
\path [-,draw,thick] (C1) -- ($ (B1) !.5! (B2) $);
\path [-,draw,thick] (C1) -- ($ (A1) !.5! (B2) $);
\path[-,draw,thick]
  (A1) edge node[near start] {$l_A$} (B2)
  (B1) edge node[near end] {$l_B$} (B2);
\end{tikzpicture}
}

   \ignore{
    \begin{figure}
\unitlength=0,9cm
\begin{graph}(11,4)
 \graphnodesize{0.2}
 \roundnode{V11}(2,3)
 \roundnode{V12}(4,3)
 \roundnode{V13}(6,3)
 \roundnode{V14}(8,3)
  \roundnode{V15}(10,3)
 \roundnode{V21}(2,1)
 \roundnode{V22}(4,1)
 \roundnode{V23}(6,1)
  \roundnode{V24}(8,1)
    \roundnode{V25}(10,1)
  %
 %
 \graphlinewidth{0.025}
 \edge{V21}{V11}
  \edge{V21}{V11}
    \edge{V22}{V11}
    \edge{V21}{V12}
    \edge{V22}{V12}
 \edge{V22}{V12}
 \edge{V23}{V13}
  \edge{V23}{V13}
 \bow{V23}{V12}{0.09}
    \bow{V22}{V13}{-0.09}
\bow{V23}{V12}{-0.09}
    \bow{V22}{V13}{0.09}
        \edge{V24}{V14}
  \edge{V24}{V14}
     \edge{V24}{V13}
    \edge{V23}{V14}
           \edge{V25}{V15}
  \edge{V25}{V15}
     \bow{V25}{V14}{0.09}
      \bow{V25}{V14}{-0.09}
    \bow{V24}{V15}{0.09}
        \bow{V24}{V15}{-0.09}
 \freetext(10.9,3){$\ldots$}
  \freetext(10.9,1){$\ldots$}
   \freetext(1,3){$\ldots$}
  \freetext(1,1){$\ldots$}
  \freetext(2,0.5){$\vdots$}
  \freetext(4,0.5){$\vdots$}
    \freetext(6,0.5){$\vdots$}
      \freetext(8,0.5){$\vdots$}  
        \freetext(10,0.5){$\vdots$} 
    \end{graph}
\caption{Discontinuous Vershik map (left-to-right ordering).}\label{Fig:Vmapdiscont}
\end{figure}
}

\begin{figure}[ht]
\unitlength=0,8cm
\begin{graph}(15,6)

  %
   \roundnode{V11}(2,5)

\nodetext{V11}(0,0.5){$w$}
\nodetext{V12}(0,0.5){$v_0$}
\nodetext{V24}(0.5,0.1){$v_1$}
 \roundnode{V12}(4,5)
 \roundnode{V13}(6,5)
 \roundnode{V14}(8,5)
  \roundnode{V15}(10,5)
    \roundnode{V16}(12,5)
        \roundnode{V17}(14,5)
 \roundnode{V21}(2,3)
 \roundnode{V22}(4,3)
 \roundnode{V23}(6,3)
  \roundnode{V24}(8,3)
    \roundnode{V25}(10,3)
    \roundnode{V26}(12,3)  
        \roundnode{V27}(14,3)  
  \roundnode{V31}(2,1)
 \roundnode{V32}(4,1)
 \roundnode{V33}(6,1)
  \roundnode{V34}(8,1)
    \roundnode{V35}(10,1)
    \roundnode{V36}(12,1)  
        \roundnode{V37}(14,1)  
 %
 \graphlinewidth{0.025}

\edge{V22}{V11}
\edgetext{V22}{V11}{$1$}
  \freetext(5,3.5){$0$}
\edge{V21}{V11}
\edgetext{V21}{V11}{$2$}
\edge{V23}{V11}

 \edge{V23}{V12}
 \edgetext{V23}{V12}{$1$}
  \edge{V22}{V12}

   \edge{V24}{V13}
     \freetext(7,3.5){$0$}
   \edgetext{V24}{V13}{$1$}
   \edge{V24}{V12}
      \edge{V23}{V13}

   \edge{V25}{V14}
   \edgetext{V25}{V14}{$1$}
     \freetext(9,3.5){$0$}
      \edge{V25}{V13}
      \edge{V24}{V14}

        \edge{V25}{V15}

        \edge{V26}{V16}
        \edge{V26}{V15}
           \edgetext{V26}{V15}{$1$}
     \edge{V26}{V14}
    \freetext(11,3.5){$0$}

        \edge{V27}{V17}
        \edge{V27}{V16}
           \edgetext{V27}{V16}{$1$}
     \edge{V27}{V15}
    \freetext(13,3.5){$0$}
    
     

       \edge{V31}{V21} 
       \edgetext{V31}{V21}{$2$}

  \edge{V33}{V23}

        \edge{V32}{V22}

   \edge{V34}{V21}

       \edge{V34}{V24}

      \edge{V35}{V25}
      \edge{V36}{V26}
      
      \edge{V35}{V22}
      \edge{V36}{V23}
         \edge{V37}{V24} 
         \edge{V37}{V21}       

        \edge{V37}{V27}
              \edge{V37}{V24}
                 \freetext(13,1){$0$}
                \freetext(13,1.5){$1$}
            \freetext(11,1.25){$1$}
            \freetext(9,1.25){$1$}
            \freetext(7,1.25){$1$}

  \freetext(14.9,5){$\ldots$}
 \freetext(14.9,3){$\ldots$}
  \freetext(14.9,1){$\ldots$}
  \freetext(1,5){$\ldots$}  
   \freetext(1,3){$\ldots$}
  \freetext(1,1){$\ldots$}
  \freetext(2,0.5){$\vdots$}
  \freetext(4,0.5){$\vdots$}
    \freetext(6,0.5){$\vdots$}
      \freetext(8,0.5){$\vdots$}  
        \freetext(10,0.5){$\vdots$} 
        \freetext(12,0.5){$\vdots$} 
                \freetext(14,0.5){$\vdots$} 
        
\end{graph}
\caption{A continuous Vershik map on a horizontally stationary Bratteli diagram (left-to-right 
ordering, each vertex has three incoming edges, vertical edges are maximal)}\label{Fig:ContinVMap1}
\end{figure}